\numberwithin{equation}{section}
\newtheorem{thm}[equation]{Theorem}
\newtheorem{prop}[equation]{Proposition}
\theoremstyle{definition}
\newtheorem{dfn}[equation]{Definition}
\newtheorem{rem}[equation]{Remark}
\newtheorem*{acknow}{Acknowledgement}
\newcommand{\nat}{\mathbb{N}}
\newcommand{\ent}{\mathbb{Z}}
\newcommand{\com}{\mathbb{C}}
\newcommand{\fie}{\mathbf{k}}
\newcommand{\q}{\mathbf{q}}
\begin{document}
\title[Deformations of Quantum Symmetric Algebras Extended by Groups]
{Deformations of Quantum Symmetric Algebras Extended by Groups}

\author[Jeanette Shakalli]{Jeanette Shakalli}
\address{Department of Mathematics, Texas A\&M University, College
Station, TX 77840}
\email{jshakall@gmail.com}


\keywords{algebraic deformation theory; quantum symmetric algebras;
Hopf module algebras; smash product algebras; universal deformation
formulas; Hochschild cohomology; Hopf algebras.}

\subjclass[2010]{
16S40, 
16T05, 
16S35, 
16E40. 
}

\date{\today.}

\begin{abstract}
We discuss a general construction of
a deformation of a smash product algebra coming from an action of a 
particular Hopf algebra. This Hopf algebra is generated by skew-primitive 
and group-like elements, and depends on a complex parameter. The smash 
product algebra is defined on the quantum symmetric algebra of a 
finite-dimensional vector space and a group. In particular, 
an application of this result has enabled us to find a deformation of 
such a smash product algebra which is, to the best of our knowledge, 
the first known example of a deformation in which the new relations in 
the deformed algebra involve elements of the original vector space. 
Using Hochschild cohomology, we show that the resulting
deformations are nontrivial by giving the precise characterization of
the infinitesimal.
\end{abstract}

\maketitle


\section{Introduction} \label{ch1}
A deformation of an algebra is obtained by slightly modifying its multiplicative 
structure. Deformations arise
in many areas of mathematics, such as combinatorics~\cite{MR1314036}, 
representation theory~\cite{MR831053}, and orbifold theory~\cite{MR1881922}. 
From this assertion, it is clear that the study of deformations of an 
algebra is of utmost importance. The main interest in this work
is to study deformations arising from an action of a Hopf algebra.

According to Giaquinto's survey paper~\cite{MR2762537}, 
the works of Fr{\"o}hlicher and Nijenhuis~\cite{MR0083801}, and 
Kodaira and Spencer~\cite{MR0112154} on deformations of complex manifolds
set the basic foundations of the modern theory of deformations. Gerstenhaber's 
seminal paper~\cite{MR0171807} marked the beginning of algebraic deformation 
theory. Although the origins of deformations lie in analytic theory, the 
algebraic setting provides a much more general framework.

In general, as described in~\cite{MR0171807}, finding all possible deformations 
of an algebra is quite a challenging task.
For a certain kind of Hopf algebra, Witherspoon~\cite{MR2267580} has found an explicit formula
that yields a deformation of its Hopf module algebras. 
Applications of 
this formula have been studied in~\cite{GUCCIONE} and~\cite{MR2267580} for the case of 
the smash product algebra of the symmetric algebra of a vector space with a group. 
The purpose of this work is to extend these results to the case of the smash 
product algebra of the \emph{quantum} symmetric algebra of a vector space with a group, 
thus increasing the number of known examples of deformations.

To place this work in context, let us briefly describe what is known about
deformations. First, let us introduce some notation. In this work, the set of
natural numbers $\nat$ includes $0$. By $\fie$, we will denote a field of characteristic $0$. 
Unless stated otherwise, by $\otimes$ we mean $\otimes_\fie$. 
Let $V$ be a $\fie$-vector space with basis $\{w_1,\dots,w_k\}$ and $T(V)$ 
the tensor algebra of $V$.
Let $q_{ij} \in \fie^\times$ for which $q_{ii} = 1$ and
$q_{ji} = q_{ij}^{-1}$ for $i,j = 1, \dots, k$. Set $\q = (q_{ij})$. 
The quantum symmetric algebra $S_\q(V)$ of $V$ is defined as 
\[ S_\q(V) = T(V) \; / \; (w_i w_j - q_{ij} w_j w_i \; | \; 1 \leq i,j \leq k), \]
where the element $w_i \otimes w_j$ is abbreviated as $w_i w_j$.
If $q_{ij} = 1$ for all $i,j$, then we obtain the symmetric algebra $S(V)$. 
Let $G$ be a group acting linearly on $V$. If the action of $G$ on $V$ extends to
all of $S_\q(V)$, then the smash product algebra $S_\q(V)\#G$ is obtained by using 
$S_\q(V) \otimes \fie G$ as a vector space but with a new multiplication given by
\[ (a\#g)(b\#h) = a \; g(b) \; \# \; gh \quad \text{ for all } a,b \in S_\q(V), \; g,h \in G. \]
For further details on this construction, see~\cite{MR1243637}.

Graded Hecke algebras, also known as Drinfeld Hecke algebras~\cite{MR831053},
can be viewed as deformations of $S(V)\#G$ of type 
\begin{equation} \label{graded}
(T(V)\#G) \; / \; (w_i w_j - w_j w_i - \sum_{g \in G} a_g(w_i,w_j) \; g), \quad
\text{ where } a_g(w_i,w_j) \in \fie.
\end{equation} 
Symplectic reflection algebras~\cite{MR1881922} and rational 
Cherednik algebras~\cite{MR1314036} are special cases of Drinfeld Hecke algebras.
Analogously, braided Cherednik algebras~\cite{MR2511188} are deformations of $S_\q(V)\#G$ of type
\begin{equation} \label{braided} 
(T(V)\#G) \; / \; (w_i w_j - q_{ij} w_j w_i - \sum_{g \in G} a_g(w_i,w_j) \; g), \quad
\text{ where } a_g(w_i,w_j) \in \fie. 
\end{equation}
For other types of deformations of $S(V)\#G$ and $S_\q(V)\#G$, not as much is known, 
and from what is known, not all the resulting algebras are necessarily defined by generators 
and relations. In some cases, these algebras are obtained via a universal deformation formula, 
originally introduced by Giaquinto and Zhang in~\cite{MR1624744}, coming from a Hopf algebra action.

In his fundamental work~\cite{MR0171807}, Gerstenhaber showed that 
two commuting derivations on an algebra $A$ lead to a deformation of $A$.
Giaquinto and Zhang generalized this idea in~\cite{MR1624744} with their theory of universal 
deformation formulas. The question that naturally arises is what about skew
derivations. The answer is that sometimes skew derivations do lead to a deformation
via a Hopf algebra action, as shown in~\cite{GUCCIONE} and~\cite{MR2267580} for
the case of $S(V)\#G$. The work presented here generalizes these results to 
the quantum version, that is to the case of $S_\q(V)\#G$. This is particularly 
relevant since explicit examples of deformations of 
$S_\q(V)\#G$ have proven to be difficult to find. Moreover, it turns out that 
some of the deformations constructed in this work are not graded in the sense 
of Braverman and Gaitsgory~\cite{MR1383469} (see Remark~\ref{rem:generalex}). 
We are confident that this newly found set of examples will 
be helpful in the quest to understand deformations more generally.

This paper is organized as follows: In Section~\ref{ch4}, we review the basics
of algebraic deformation theory and define some concepts that we will need.
In Section~\ref{motivation}, we construct some original examples of deformations.
In Section~\ref{general}, we build a general theory that encompasses the examples 
found in Section~\ref{motivation} as a special case. In Section~\ref{ch5}, 
using Hochschild cohomology, we prove that the deformations obtained in 
Section~\ref{general} are nontrivial.

\section{Preliminaries} \label{ch4}
In this section, we define a deformation of an algebra and briefly discuss 
the difficulties that arise when trying to find all possible deformations 
of a given algebra. We end this section by introducing universal deformation 
formulas. The basics of algebraic deformation theory can be found 
in~\cite{MR0171807}, \cite{MR981619} and~\cite{MR2762537}.

\begin{dfn} \label{def:deformation}
Let $t$ be an indeterminate. A \emph{formal deformation} of a 
$\fie$-algebra $A$ is an associative algebra $A[[t]]$ over the formal
power series ring $\fie[[t]]$ with multiplication
\[ a * b = ab + \mu_1(a \otimes b) \; t + \mu_2(a \otimes b) \; t^2 + \dots \]
for all $a,b \in A$, where $ab$ denotes the multiplication in $A$
and the maps $\mu_i : A \otimes A \rightarrow A$ are $\fie$-linear extended 
to be $\fie[[t]]$-linear.
\end{dfn}

Given an algebra $A$, a natural question that arises is to determine all possible deformations of $A$.
By Definition~\ref{def:deformation}, we may restate this problem as follows:
Find all sequences $\{\mu_i\}$ such that the corresponding map $*$ is associative on $A[[t]]$.
This associativity condition allows us to match coefficients of $t^n$ for $n \in \nat$, which in turn
gives conditions that the maps $\mu_i$ must satisfy. It is well-known that the obstructions to the
existence of the maps $\mu_i$ are Hochschild $3$-coboundaries (see, for instance,~\cite{MR0171807}), 
which motivates a recursive
procedure to find the maps $\mu_i$. Such a procedure is a very difficult process, if at all possible.
In fact, this is a potentially infinite process. 
For this reason, many authors, such as Guccione et al.~\cite{GUCCIONE}
and Witherspoon~\cite{MR2267580}, concentrate their efforts in the study of deformations 
of an algebra coming from an action of a Hopf algebra.

Giaquinto and Zhang~\cite{MR1624744} developed the theory of universal deformation formulas. 
Here we define a universal deformation formula based on a bialgebra. In what follows,
by $\Delta$ and $\varepsilon$ we denote the comultiplication and the counit of the bialgebra.

\begin{dfn} \label{def:UDF}
A \emph{universal deformation formula} based on a bialgebra $B$ is
an element $F \in (B \otimes B)[[t]]$ of the form 
\[ F = 1_B \otimes 1_B + t \; F_1 + t^2 \; F_2 + \dots \]
where $F_i \in B \otimes B$, for $i=1,2,\dots$, satisfying the following three conditions:
\begin{align*} 
(\varepsilon \otimes \mathrm{id}_B) \; (F) & = 1 \otimes 1_B, \\
(\mathrm{id}_B \otimes \varepsilon) \; (F) & = 1_B \otimes 1, \\
[ \; (\Delta \otimes \mathrm{id}_B) \; (F) \; ] \; (F \otimes 1_B) & = 
[\; (\mathrm{id}_B \otimes \Delta) \; (F) \;] \; (1_B \otimes F).
\end{align*}
\end{dfn}

Such a formula is universal in the sense that it applies to any 
$B$-module algebra $A$ to yield a deformation of $A$ (see Theorem~$1.3$
in~\cite{MR1624744} for a detailed proof). In particular,
$m \circ F$ is the multiplication in the deformed algebra of $A$, where $m$ 
denotes the multiplication in $A$.

Since it will be needed in what follows, let us recall the Hopf algebra $H_q$ 
introduced in~\cite{MR2267580}.
Let $q \in \com^\times$. Let $H$ be the algebra generated by $D_1$, $D_2$,
$\sigma$ and $\sigma^{-1}$ subject to the relations
\[ D_1 D_2 = D_2 D_1, \quad
 q \sigma D_i = D_i \sigma \; \; \text{ for } i = 1,2, \quad
 \sigma \sigma^{-1} = \sigma^{-1} \sigma = 1. \]
Then $H$ is a Hopf algebra with
\begin{align*}
\Delta(D_1) & = D_1 \otimes \sigma + 1 \otimes D_1,
&\varepsilon(D_1) & = 0,
&S(D_1) & = -D_1 \sigma^{-1}, \\
\Delta(D_2) & = D_2 \otimes 1 + \sigma \otimes D_2,
&\varepsilon(D_2) & = 0, 
&S(D_2) & = -\sigma^{-1} D_2, \\
\Delta(\sigma) & = \sigma \otimes \sigma,
&\varepsilon(\sigma) & = 1,
&S(\sigma) & = \sigma^{-1},
\end{align*}
where $S$ denotes the antipode of $H$.
Let $n \in \ent, \; n \geq 2$. Let $I$ be the ideal of $H$ generated by 
$D_1^n$ and $D_2^n$. If $q$ is a primitive $n$th root of unity, 
then $I$ is a Hopf ideal. Thus, the quotient $H/I$ is also a Hopf algebra. 
Define 
\[ H_q =
\begin{cases}
 H/I, & \text{ if } q \text{ is a primitive } n \text{th root of unity,} \\
 H, & \text{ if } q = 1 \text{ or } q \text{ is not a root of unity.}
\end{cases}
\]

Witherspoon proved in~\cite{MR2267580} that
\[ \mathrm{exp}_q(t \; D_1 \otimes D_2) =
\begin{cases}
 \sum_{i=0}^{n-1} \frac{1}{(i)_q!} \; (t D_1 \otimes D_2)^i, & \text{if } q 
 \text{ is a primitive } n \text{th root of unity,} \\
 \sum_{i=0}^{\infty} \frac{1}{(i)_q!} \; (t D_1 \otimes D_2)^i, & \text{if } 
 q = 1 \text{ or } q \text{ is not a root of unity},
\end{cases} \]
is a universal deformation formula based on $H_q$, where $(i)_q$ denotes
the quantum integer and $(i)_q!$ the quantum factorial. Therefore, for
every $H_q$-module algebra $A$, 
\begin{equation} \label{formula}
 m \circ \mathrm{exp}_q(t \; D_1 \otimes D_2)
\end{equation}
yields a formal deformation of $A$, where $m$ denotes the multiplication
in $A$.
By an $H_q$-module algebra $A$, we mean an
algebra that is also an $H_q$-module, for which the two structures are 
compatible, that is
\begin{equation} \label{conditionstar}
 h(ab) = \sum h_1(a) \; h_2(b) \quad \text{and} \quad h(1_A) = \varepsilon(h) \; 1_A \quad
 \text{ for all } h \in H_q, \; a,b \in A,
\end{equation}
where the sigma notation
\[ \Delta(h) = \sum h_1 \otimes h_2 \; \in \; H_q \otimes H_q \quad \text{ for } h \in H_q \]
is as in~\cite{MR1243637}.

We end this section with the following notation which we will use in Section~\ref{ch5}.
Denote by $V^*$ the dual vector space of $V$ and by $\{w_1^*,\dots,w_k^*\}$ the corresponding dual
basis, i.e. $w_i^*(w_j) = \delta_{ij},$ where $\delta_{ij}$ denotes the Kronecker symbol.
For any $\alpha \in \nat^k$, the length $|\alpha|$ of $\alpha$ is defined as
\[ |\alpha| = \sum_{i=1}^k \alpha_i. \] 
For $\alpha \in \nat^k$, we define $w^\alpha = w_1^{\alpha_1} \cdots w_k^{\alpha_k}$.
Finally, whenever $A$ is a set and $G$ a group acting on $A$, we will denote by $A^G$ the elements
of $A$ that are invariant under this action.

Let $\q = (q_{ij})$ be as in Section~\ref{ch1}. The quantum exterior algebra $\textstyle \bigwedge_\q(V)$
of $V$ is defined as
\[ \textstyle \bigwedge_\q(V) = T(V) \; / \; (w_i w_j + q_{ij} w_j w_i \; | \; 1 \leq i,j \leq k). \]
If $q_{ij} = 1$ for all $i,j$,
then we obtain the exterior algebra $\bigwedge(V)$. Denote the multiplication
in $\textstyle \bigwedge_\q(V)$ by $\wedge$. For any $\beta \in \{0,1\}^k$, let $w^{\wedge \beta}$ 
denote the vector $w_{j_1} \wedge \dots \wedge w_{j_m} \in \bigwedge^m(V)$, which is 
defined by $m = |\beta|, \; \beta_{j_\ell} = 1$ for all $\ell = 1,\dots,m$, and 
$1 \leq j_1 < \dots < j_m \leq k$.

\section{A Motivational Example} \label{motivation}

The example described in this section is, to the best of our knowledge, the
first known example of a deformation of $S_\q(V)\#G$ in which the new relations
in the deformed algebra involve not only group elements and the indeterminate,
but also elements of $V$ (compare to~\eqref{graded} and~\eqref{braided}). 
We also present a generalization of this motivational example, which in turn 
generates numerous new examples of deformations of $S_\q(V)\#G$. 

Let $\fie = \com$ and $\mathrm{dim}(V) = 3$. 
Let $q \in \com$ be a primitive $n$th root of unity for $n \geq 2$.
Define $S_\q(V)$ by setting $q_{12} = q_{13} = q$ and $q_{23} = 1$.
Let \[ G = \langle \sigma_1, \sigma_2 \; | \; \sigma_1^n = \sigma_2^n =
1, \; \sigma_1 \sigma_2 = \sigma_2 \sigma_1 \rangle \] be a group acting on
$S_\q(V)$ via
\begin{align*}
\sigma_1(w_1) & = qw_1, &\sigma_1(w_2) & = w_2, &\sigma_1(w_3) & = qw_3, \\
\sigma_2(w_1) & = w_1, &\sigma_2(w_2) & = qw_2, &\sigma_2(w_3) & = qw_3.
\end{align*}
Define the functions $\chi_i : G \rightarrow \com^\times$ by 
\[ g(w_i) = \chi_i(g) \; w_i \quad \text{ for all } g \in G, \; i = 1,2,3. \]
In particular, $\chi_1(\sigma_1) = q$ and $\chi_1(\sigma_2) = 1$.
Define an action of $H_q$ on the generators of $S_\q(V)$ and on $G$ by
   \begin{align*}
    D_1(w_1) & = \sigma_2, 
    &D_2(w_1) & = 0, 
    &\sigma(w_1) & = q \; w_1, \\
    D_1(w_2) & = 0, 
    &D_2(w_2) & = w_3 \; \sigma_1 \; \sigma_2^{-1}, 
    &\sigma(w_2) & = w_2, \\
    D_1(w_3) & = 0, 
    &D_2(w_3) & = 0,
    &\sigma(w_3) & = w_3, \\
    D_1(g) & = 0, 
    &D_2(g) & = 0,
    &\sigma(g) & = \chi_1(g^{-1}) \; g.
   \end{align*}
Then extend this action of $H_q$ to all of $S_\q(V) \# G$ under the following conditions:
   \begin{align*}
    D_1(a b) & = D_1(a) \; \sigma(b) + a \; D_1(b), \\
    D_2(a b) & = D_2(a) \; b + \sigma(a) \; D_2(b), \\
    \sigma(a b) & = \sigma(a) \; \sigma(b),
   \end{align*}
for all $a,b \in S_\q(V) \# G$. 

\begin{rem} \label{rem:extend}
 The extension conditions follow from the definition of the coproduct in $H_q$. 
 This is done so as to obtain an $H_q$-module algebra, which is shown to be true 
 in the upcoming discussion.
\end{rem}

\begin{prop} \label{extension}
Let $\{w_1^i \; w_2^j \; w_3^m \; g \; | \; i,j,m \in \nat$, $g\in G\}$ denote a basis
of $S_\q(V) \# G$. Then, assuming it is well-defined, the extension of the action of $H_q$ 
on $S_\q(V)$ and $G$ to all of $S_\q(V) \# G$ is given by the following formulas:
\begin{align*}
 D_1(w_1^i \; w_2^j \; w_3^m \; g) & = 
 (i)_q \; q^{j{+}m} \; \chi_1(g^{{-}1}) \; w_1^{i{-}1} \; w_2^j \; w_3^m \; \sigma_2 \; g, \\
 D_2(w_1^i \; w_2^j \; w_3^m \; g) & = 
 (j)_{q^{{-}1}} \; q^i \; w_1^i \; w_2^{j{-}1} \; w_3^{m{+}1} \; 
 \sigma_1 \; \sigma_2^{{-}1} \; g, \\
 \sigma(w_1^i \; w_2^j \; w_3^m \; g) & = 
 q^i \; \chi_1(g^{{-}1}) \; w_1^i \; w_2^j \; w_3^m \; g,
\end{align*}
where a negative exponent of $w_\ell \; (\ell = 1,2,3)$ is interpreted to be $0$.
 In this setting, $S_\q(V) \# G$ is an $H_q$-module algebra.
 \end{prop}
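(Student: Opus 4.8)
The plan is to establish two separate claims: first, that the closed formulas displayed in the proposition do hold (i.e. that the recursively-defined extension, if well-defined, must be given by these formulas), and second, that with these formulas in hand, $S_\q(V)\#G$ is an $H_q$-module algebra. For the first claim, I would induct on the total degree $i+j+m$ of a monomial $w_1^i w_2^j w_3^m g$. The base cases are exactly the generator values prescribed just before the statement, together with the action on group elements; for the inductive step, I would write $w_1^i w_2^j w_3^m g = w_1 \cdot (w_1^{i-1} w_2^j w_3^m g)$ (and similarly peel off a $w_2$ or $w_3$ when needed), apply the skew-Leibniz rules $D_1(ab)=D_1(a)\sigma(b)+aD_1(b)$, $D_2(ab)=D_2(a)b+\sigma(a)D_2(b)$, $\sigma(ab)=\sigma(a)\sigma(b)$, insert the inductive hypothesis, and then collect terms. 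The quantum integers $(i)_q$ and $(j)_{q^{-1}}$ arise precisely from the telescoping identities $(i)_q = 1 + q(i-1)_q$ and $(j)_{q^{-1}} = 1 + q^{-1}(j-1)_{q^{-1}}$, and the scalars $q^{j+m}$, $q^i$ come from commuting $\sigma$ past the monomials using $\sigma(w_1)=qw_1$, $\sigma(w_2)=w_2$, $\sigma(w_3)=w_3$ and the smash-product relations $w_1\sigma_2 = \sigma_2 w_1$, etc. One has to check consistency: peeling off $w_1$ versus peeling off $w_2$ must give the same answer, which ultimately reduces to the commutativity $D_1D_2=D_2D_1$ in $H_q$ and the relation $q\sigma D_i = D_i\sigma$ being respected by the assignments on generators — this is the content of the phrase ``assuming it is well-defined.''

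For the second claim I would verify the two defining conditions \eqref{conditionstar} of an $H_q$-module algebra. The compatibility $h(ab)=\sum h_1(a)h_2(b)$ for $h\in\{D_1,D_2,\sigma\}$ is, for products of basis elements, exactly the extension rules we imposed, but one must check it is consistent with the defining relations of $S_\q(V)\#G$ — that is, that $D_1$, $D_2$, $\sigma$ each annihilate (or rather, send to zero in the appropriate twisted sense) the ideal $(w_iw_j - q_{ij}w_jw_i)$ and respect the cross relations $gw = g(w)g$. Concretely I would compute $D_1(w_iw_j - q_{ij}w_jw_i)$, $D_2(w_iw_j - q_{ij}w_jw_i)$, $\sigma(w_iw_j - q_{ij}w_jw_i)$ for each pair $(i,j)$ using the Leibniz rules and the generator values, and confirm each vanishes; here the specific choices $q_{12}=q_{13}=q$, $q_{23}=1$, $D_1(w_1)=\sigma_2$, $D_2(w_2)=w_3\sigma_1\sigma_2^{-1}$ are calibrated to make this work. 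Likewise $h(1)=\varepsilon(h)1$ is immediate since $D_1(1)=D_2(1)=0$ and $\sigma(1)=1$. Then one checks that $D_1^n$ and $D_2^n$ act as zero when $q$ is a primitive $n$th root of unity, so that the action descends from $H$ to $H_q=H/I$: from the closed formula, $D_1^n(w_1^iw_2^jw_3^mg)$ carries a factor $(i)_q(i-1)_q\cdots(i-n+1)_q$, and among any $n$ consecutive quantum integers one is $(n)_q=0$; similarly for $D_2^n$ with $(j)_{q^{-1}}$ and the fact that $q^{-1}$ is also a primitive $n$th root of unity.

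Finally I would confirm that the algebra relations of $H_q$ themselves act consistently — $D_1D_2$ and $D_2D_1$ agree as operators on $S_\q(V)\#G$, and $q\sigma D_i = D_i\sigma$ holds as operators — by direct comparison of the closed formulas (these are finite check involving the two relevant monomials, and the scalars match because $q^{j+m}$ and $q^i$ are multiplicative under the relevant shifts). I expect the main obstacle to be the well-definedness / consistency bookkeeping in the first claim: making sure that the two ways of building up a monomial $w_1^iw_2^jw_3^mg$ via the Leibniz rules produce identical outputs, and equivalently that $D_1$, $D_2$, $\sigma$ genuinely descend to the quotient $S_\q(V)$ rather than merely being defined on $T(V)\#G$. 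Once that is settled, the module-algebra axioms and the descent to $H_q$ follow by the routine (if slightly tedious) computations sketched above, and one concludes that \eqref{formula} applies to give a deformation of $S_\q(V)\#G$.
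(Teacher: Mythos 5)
Your plan is correct and follows exactly the strategy the paper itself uses: verify that the generators $D_1$, $D_2$, $\sigma$ preserve the defining relations of $S_\q(V)\#G$ and that the relations of $H_q$ (including $D_1D_2=D_2D_1$, $q\sigma D_i=D_i\sigma$, and $D_i^n=0$ at a primitive root of unity) hold as operators, the paper simply omitting all details ``for the sake of brevity.'' Your additional induction deriving the closed formulas and the quantum-integer telescoping (more precisely $(i)_q=(i-1)_q+q^{i-1}$ when peeling $w_1$ off the left) is the right bookkeeping and fills in what the paper leaves unstated.
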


 \begin{proof}
  It is sufficient to check that the relations of $H_q$ are preserved by the generators 
  of $S_\q(V) \# G$, and that the relations of $S_\q(V) \# G$ are preserved by the 
  generators of $H_q$. 
  For the sake of brevity, we omit the details.
 \end{proof}

\begin{rem} \label{rem:derivation}
 By construction, $\sigma$ is an automorphism of $S_\q(V)\#G$, $D_1$ acts as a $\sigma,1$-skew 
 derivation and $D_2$ acts as a $1,\sigma$-skew derivation of $S_\q(V)\#G$.
\end{rem}

As a consequence of Proposition~\ref{extension}, if we set, for instance, 
$q = -1$ (and hence $n = 2$), then by~\eqref{formula}, we get that
\begin{equation} \label{q-1}
   m \circ \mathrm{exp}_q(t D_1 \otimes D_2) = 
   m \circ \left( \sum_{i=0}^{n-1} \frac{1}{(i)_q!} \; (t D_1 \otimes D_2)^i \right) = 
   m \circ (\mathrm{id} \otimes \mathrm{id} + t D_1 \otimes D_2)
\end{equation}
yields a deformation of $S_\q(V)\#G$.

\begin{prop} \label{prop:q-1}
 If $q = -1$, then the deformation of $S_\q(V)\#G$ given by~\eqref{q-1} is
 \[ (T(V)\#G)[[t]] \; / \; (w_1 w_2 + w_2 w_1 + t w_3 \sigma_1, \; w_1 w_3 + w_3 w_1, \; w_2 w_3 - w_3 w_2). \]
\end{prop}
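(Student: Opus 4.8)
The plan is to compute the deformed multiplication $*$ directly on the degree-one generators $w_1, w_2, w_3$ of $S_\q(V)$ sitting inside $S_\q(V)\#G$, using the explicit formula $a * b = ab + t\, m(D_1(a)\otimes D_2(b))$ from~\eqref{q-1}, and then argue that these computed relations, together with a dimension/basis count, determine the deformed algebra completely. First I would read off the action of $D_1$ and $D_2$ on the generators from the table (equivalently from Proposition~\ref{extension} with $i,j,m \in \{0,1\}$ and $g = 1$): $D_1(w_1) = \sigma_2$, $D_1(w_2) = D_1(w_3) = 0$, and $D_2(w_2) = w_3\,\sigma_1\,\sigma_2^{-1}$, $D_2(w_1) = D_2(w_3) = 0$. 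Then for each ordered pair of generators I compute $w_i * w_j = w_i w_j + t\, D_1(w_i)\, D_2(w_j)$, where the product $D_1(w_i) D_2(w_j)$ is taken in $S_\q(V)\#G$.

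Carrying this out: the only pair for which $D_1(w_i) D_2(w_j) \neq 0$ is $i = 1$, $j = 2$, giving $D_1(w_1) D_2(w_2) = \sigma_2 \cdot w_3\,\sigma_1\,\sigma_2^{-1}$; using the smash product rule $\sigma_2 \cdot (w_3 \# \sigma_1\sigma_2^{-1}) = \sigma_2(w_3)\, \#\, \sigma_2\sigma_1\sigma_2^{-1} = \chi_3(\sigma_2)\, w_3\, \#\, \sigma_1 = q\, w_3\,\sigma_1$ (since $G$ is abelian), which at $q = -1$ equals $-w_3\sigma_1$. Wait — I should double check the sign: one needs $\sigma_2(w_3) = q w_3$ from the action table, so $w_1 * w_2 = w_1 w_2 - t\, w_3 \sigma_1$ at $q=-1$... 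I would recompute carefully whether the coefficient in the stated relation should be $+t w_3\sigma_1$ or $-t w_3\sigma_1$; in any case the relation $w_1 w_2 - q_{12} w_2 w_1 = 0$ in $S_\q(V)$ with $q_{12} = q = -1$ becomes $w_1 w_2 + w_2 w_1 = 0$ in the original algebra, and in the deformation the defining relation among the images of $w_1, w_2$ becomes $w_1 * w_2 + w_2 * w_1 = w_1 w_2 + w_2 w_1 + t(D_1(w_1)D_2(w_2) + D_1(w_2)D_2(w_1)) = t\,(\text{scalar})\, w_3\sigma_1$, which after absorbing the sign/scalar of $t$ (a harmless rescaling of the deformation parameter) yields $w_1 * w_2 + w_2 * w_1 = t\, w_3\,\sigma_1$. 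For the other two pairs $D_1 D_2$ vanishes, so $w_1 * w_3 = w_1 w_3$ and $w_2 * w_3 = w_2 w_3$, and since $w_1 w_3 + w_3 w_1 = 0$ and $w_2 w_3 - w_3 w_2 = 0$ hold in $S_\q(V)$ (as $q_{13} = q = -1$, $q_{23} = 1$), the same relations hold in the deformed algebra; likewise the $G$-relations and the cross relations $g * w_i = g(w_i)$ are unchanged because $D_1, D_2$ kill group elements. This shows that $(T(V)\#G)[[t]]$ modulo the three displayed relations surjects onto the deformed algebra.

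To finish I would invoke that a formal deformation is, as a $\fie[[t]]$-module, free with the same basis as $A$ — here the PBW-type basis $\{w_1^i w_2^j w_3^m g\}$ of $S_\q(V)\#G$ — so the deformed algebra has the ``right size.'' One then checks that the candidate presentation $(T(V)\#G)[[t]] / (w_1 w_2 + w_2 w_1 + t w_3\sigma_1,\ w_1 w_3 + w_3 w_1,\ w_2 w_3 - w_3 w_2)$ also has this PBW basis over $\fie[[t]]$: this is a diamond-lemma / Bergman argument, reordering any monomial into the form $w_1^i w_2^j w_3^m g$ and verifying the single overlap ambiguity $w_1 w_2 w_3$ (the $w_3\sigma_1$ appearing on the right of the first relation is already in normal form up to moving $\sigma_1$ past nothing, and $\sigma_1$ commutes with the scalars $\chi_i$), so no new relations are forced. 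Since both algebras are free $\fie[[t]]$-modules on the same basis and we have a surjection between them that is the identity mod $t$, it is an isomorphism. The main obstacle I anticipate is precisely this PBW/Bergman verification for the presented algebra — getting the sign and scalar bookkeeping in the smash product right when resolving the $w_1 w_2 w_3$ overlap, and confirming that $w_3 \sigma_1$ on the right-hand side does not create a fresh ambiguity when it interacts with subsequent multiplications by $w_1$ or $w_2$; the raw computation of $w_i * w_j$ itself is routine given Proposition~\ref{extension}.
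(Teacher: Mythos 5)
Your computation of the deformed products and the resulting relations matches the paper's exactly (including the sign: $\sigma_2 \cdot w_3\,\sigma_1\,\sigma_2^{-1} = \sigma_2(w_3)\,\sigma_1 = q\,w_3\,\sigma_1 = -w_3\,\sigma_1$ at $q=-1$, so $w_1 * w_2 + w_2 * w_1 = -t\,w_3\,\sigma_1$ and the ideal generator $w_1w_2 + w_2w_1 + t\,w_3\,\sigma_1$ comes out directly --- no rescaling of $t$ is needed, so your hedge about ``absorbing the sign'' can be dropped), as does the construction of the surjection $\varphi$ from $(T(V)\#G)[[t]]/I$ onto the deformed algebra $\mathcal{D}$. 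Where you genuinely diverge is the final step, showing this surjection is injective. You propose to establish a full PBW basis for the presented algebra by a diamond-lemma/Bergman argument, resolving the overlap ambiguity $w_1w_2w_3$, and then compare two free $\fie[[t]]$-modules on the same basis. The paper instead proves only the easy half of PBW: that the monomials $w_1^\alpha w_2^\beta w_3^\gamma g$ \emph{span} $(T(V)\#G)[[t]]/I$ (an induction on degree requiring no ambiguity checking), which bounds the dimension of each graded piece of the associated graded algebra from above; the surjection onto $\mathcal{D}$, whose monomial basis is already known, bounds it from below, and the two bounds coincide. This sidesteps exactly the overlap verification you flag as your main anticipated obstacle --- linear independence is extracted from the known size of $\mathcal{D}$ rather than re-proved for the presentation. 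Your route is correct and self-contained (it reproves the PBW property of $\mathcal{D}$ along the way), but it does strictly more work than the paper's counting argument.
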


\begin{proof}
 Let us denote by $\mathcal{D}$ the deformation of $S_\q(V)\#G$ obtained by defining 
 the multiplication on $(S_\q(V)\#G)[[t]]$ by~\eqref{q-1}.
 To find the new relations in the deformed algebra $\mathcal{D}$, consider, for example,
 \begin{align*}
  w_1 * w_2 & = (m \circ (\mathrm{id} \otimes \mathrm{id} + t \; D_1 \otimes D_2)) \; (w_1 \otimes w_2) \\
            & = m \; (w_1 \otimes w_2) + m \; (t \; (D_1 \otimes D_2) \; (w_1 \otimes w_2)) \\
            & = w_1 \; w_2 + m \; (t \; (D_1(w_1) \otimes D_2(w_2))) \\
            & = w_1 \; w_2 + m \; (t \; (\sigma_2 \otimes w_3 \; \sigma_1 \; \sigma_2^{-1})) \\
            & = w_1 \; w_2 + t \; \sigma_2 \; w_3 \; \sigma_1 \; \sigma_2^{-1} \\
            & = w_1 \; w_2 - t \; w_3 \; \sigma_1.
 \end{align*}
 Similarly, 
 \begin{align*}
  w_2 * w_1 & = (m \circ (\mathrm{id} \otimes \mathrm{id} + t \; D_1 \otimes D_2)) \; (w_2 \otimes w_1) \\
            & = m \; (w_2 \otimes w_1) + m \; (t \; (D_1 \otimes D_2) \; (w_2 \otimes w_1)) \\
            & = w_2 \; w_1 + m \; (t \; (D_1(w_2) \otimes D_2(w_1))) \\
            & = w_2 \; w_1.
 \end{align*} 
 Thus,
 \[ w_1 * w_2 + w_2 * w_1 = w_1 \; w_2 + w_2 \; w_1 - t \; w_3 \; \sigma_1 = 
     -t \; w_3 \; \sigma_1, \]
 since $w_1 w_2 = - w_2 w_1$ in the original algebra.
 Dropping the $*$ notation, we get that the new relation in $\mathcal{D}$ is
 \[ w_1 w_2 + w_2 w_1 = - t w_3 \sigma_1, \]
 as desired. Similar calculations show that in the deformed algebra $\mathcal{D}$, the following
 relations also hold:
 \[ w_1 w_3 = - w_3 w_1 \quad \text{and} \quad w_2 w_3 = w_3 w_2. \]
 Define an algebra homomorphism 
 \begin{align*}
  \varphi : (T(V)\#G)[[t]] & \rightarrow \mathcal{D} \\
  w_i & \mapsto w_i \\
  g & \mapsto g.
 \end{align*}
 By construction, the map $\varphi$ is surjective since $w_i$ and $g$ are in the image of $\varphi$
 and these elements generate $\mathcal{D}$ by an inductive argument. To see this, recall that by definition,
 $\mathcal{D}$ is $(S_\q(V)\#G)[[t]]$ as a vector space. As a free $\fie[[t]]$-module, $\mathcal{D}$
 has a free generating set $\{w_1^\alpha \; w_2^\beta \; w_3^\gamma \; g \; | \; \alpha, \beta, 
 \gamma \in \nat\}$. It is possible to show by induction on the degree
 $\alpha + \beta + \gamma$ of $w_1^\alpha \; w_2^\beta \; w_3^\gamma \; g$ that this element is in the 
 image of $\varphi$. 
 
 Let $I$ denote the ideal of $(T(V)\#G)[[t]]$ generated by the relations
 \[ w_1 w_2 + w_2 w_1 + t w_3 \sigma_1, \quad w_1 w_3 + w_3 w_1, \quad w_2 w_3 - w_3 w_2. \]
 The calculations presented above show that $I \subseteq \mathrm{ker}(\varphi)$. 
 Note that $(T(V)\#G)[[t]] \; / \; I$ is a filtered algebra, due to the nature of the
 elements of $I$. 

 By an induction argument on the degree, it is possible to show that the elements of the form
 $w_1^\alpha \; w_2^\beta \; w_3^\gamma \; g$, where $\alpha$, $\beta$ and $\gamma$ 
 are nonnegative integers,
 span $(T(V)\#G)[[t]] \; / \; I$ as a free $\fie[[t]]$-module. Thus, the dimension of the
 associated graded algebra of $(T(V)\#G)[[t]] \; / \; I$ in each degree $n$ is at most the 
 number of elements of the form
 $w_1^\alpha \; w_2^\beta \; w_3^\gamma \; g$ with $\alpha + \beta + \gamma = n$. 
 On the other hand, since $I \subseteq \mathrm{ker}(\varphi)$, the map
 \[ (T(V)\#G)[[t]] \; / \; I \; \longrightarrow \; (T(V)\#G)[[t]] \; / \; \mathrm{ker}(\varphi) \]
 is surjective. Thus, the dimension of the associated graded algebra of $(T(V)\#G)[[t]] \; / \; I$ 
 in each degree $n$ is at least the dimension of the associated graded algebra of
 $(T(V)\#G)[[t]] \; / \; \mathrm{ker}(\varphi)$ in degree $n$. Since
 $(T(V)\#G)[[t]] \; / \; \mathrm{ker}(\varphi)$ is isomorphic to $\mathcal{D}$ and we know that
 the elements of the form $w_1^\alpha \; w_2^\beta \; w_3^\gamma \; g$ form a basis of $\mathcal{D}$,
 it follows that the deformation is precisely $(T(V)\#G)[[t]] \; / \; I$.
\end{proof}

As advertised, the vector space element $w_3$ appears in the new relations multiplied by
the indeterminate $t$ and the group element $\sigma_1$. Thus, restricting to the
polynomial algebra $(T(V)\#G)[t]$
and specializing to $t = 1$, this deformation involves relations of type
\[ w_i w_j - q_{ij} w_j w_i - \sum_{g \in G} w_m \; a_g(w_i,w_j) \; g \quad \text{ for some } m. \]
Notice that these relations differ from those used to define the braided 
Cherednik algebras~\eqref{braided} by the presence of the vector space element $w_m$.

\begin{rem} \label{allq}
The arguments presented in the proof of Proposition~\ref{prop:q-1} can be generalized to
any primitive $n$th root of unity $q$ for $n \geq 2$. For example, set $n = 3$. Then by~\eqref{formula}, 
we get that
\begin{equation} \label{q3=1}
   m \circ \mathrm{exp}_q(t \; D_1 \otimes D_2) = 
   m \circ \left(\mathrm{id} \otimes \mathrm{id} + t \; D_1 \otimes D_2 + 
   \frac{1}{1+q} \; (t^2 \; D_1^2 \otimes D_2^2) \right)
\end{equation}
yields a deformation of $S_\q(V)\#G$.
In this case, the deformation can be found to be
\begin{equation} \label{eq:allq}
 (T(V)\#G)[[t]] \; / \; (w_1 w_2 + w_2 w_1 - q t w_3 \sigma_1, \; 
  w_1 w_3 + w_3 w_1, \; w_2 w_3 - w_3 w_2).
\end{equation}
To see this, it is sufficient to notice that $D_i^2(w_j) = 0$ for $i,j = 1,2$.
\end{rem}

\begin{rem} \label{rem:exgraded}
 Notice that the deformation~\eqref{eq:allq} of $S_\q(V)\#G$ is a graded deformation
 in the sense of Braverman and Gaitsgory~\cite{MR1383469}. To see this,
 we assign degree $1$ to the indeterminate $t$ and to the vector space elements $w_i$, 
 and degree $0$ to the group elements $g$. In this way, each of the relations obtained
 are homogeneous of degree $2$. Therefore, the quotient is graded.
\end{rem}

\begin{rem} \label{rem:generalex}
This example can be generalized to higher dimensions as follows:
Let $q$ be a primitive $n$th root of unity $(n \geq 2)$ and let $\mathrm{dim}(V) = k \geq 3$. Set
$q_{1j} = q$ for $j = 2,\dots,k$, and $q_{ij} = 1$ for $i,j = 2,\dots,k$.
Let $G = \langle \sigma_1, \sigma_2 \; | \; \sigma_1^n = \sigma_2^n =
1, \; \sigma_1 \sigma_2 = \sigma_2 \sigma_1 \rangle$ be a group acting on
$S_\q(V)$ via
 \[ \sigma_1(w_1) = q w_1, \quad \sigma_1(w_2) = w_2, \quad \sigma_1(w_3) = q w_3, 
    \quad \cdots \quad \sigma_1(w_k) = q w_k, \]
 \[ \sigma_2(w_1) = w_1, \quad \sigma_2(w_2) = q w_2, \quad \sigma_2(w_3) = q w_3,
    \quad \cdots \quad \sigma_2(w_k) = q w_k. \]
Define the functions $\chi_i : G \rightarrow \com^\times$ by 
\[ g(w_i) = \chi_i(g) \; w_i \quad \text{ for all } g \in G, \; i = 1,\dots,k. \]
Define an action of $H_q$ on the generators of $S_\q(V)$ and on $G$ as follows: \\
Let $\alpha_1, \alpha_2, \dots, \alpha_{k-2}, \beta_1, \beta_2, \dots, \beta_{k-2} \in \nat$. Then define
 \begin{align*}
  D_1(w_1) & = w_3^{\alpha_1 n} \; w_4^{\alpha_2 n} \cdots w_k^{\alpha_{k-2} \, n} \; \sigma_2, &D_2(w_1) & = 0, \\
  D_1(w_2) & = 0, &D_2(w_2) & = w_3^{\beta_1 n + 1} \; w_4^{\beta_2 n} \cdots w_k^{\beta_{k-2} \, n} \; 
  \sigma_1 \; \sigma_2^{-1}, \\
  D_1(w_3) & = 0, &D_2(w_3) & = 0, \\
  &\vdots &&\vdots \\
  D_1(w_k) & = 0, &D_2(w_k) & = 0, \\
  D_1(g) & = 0, &D_2(g) & = 0,
 \end{align*}
and
 \[ \sigma(w_1) = q w_1, \quad
  \sigma(w_2) = w_2, \quad
  \sigma(w_3) = w_3, \quad \cdots \quad
  \sigma(w_k) = w_k, \quad
  \sigma(g) = \chi_1(g^{-1}) \; g. \]

If we extend the action of $H_q$ to all of $S_\q(V) \# G$ under the same conditions as before and denote
a basis of $S_\q(V) \# G$ by 
$\{ w_1^{i_1} \; w_2^{i_2} \dots w_k^{i_k} \; g \; | \; i_1, \dots, i_k \in \nat, \; g \in G\}$, 
then we obtain the following formulas:
\begin{align*}
 D_1(w_1^{i_1} \; w_2^{i_2} \dots w_k^{i_k} \; g) & =
 (i_1)_q \; q^{i_2 + \dots + i_k} \; \chi_1(g^{-1}) \; w_1^{i_1 - 1} \; w_2^{i_2} \; w_3^{i_3 + \alpha_1 n} \dots
 w_k^{i_k + \alpha_{k-2} \, n} \; \sigma_2 \; g, \\
 D_2(w_1^{i_1} \; w_2^{i_2} \dots w_k^{i_k} \; g) & =
 (i_2)_{q^{-1}} \; q^{i_1} \; w_1^{i_1} \; w_2^{i_2 - 1} \; w_3^{i_3 + \beta_1 n + 1} \; w_4^{i_4 + \beta_2 n} \dots
 w_k^{i_k + \beta_{k-2} \, n} \; \sigma_1 \; \sigma_2^{-1} \; g, \\
 \sigma(w_1^{i_1} \; w_2^{i_2} \dots w_k^{i_k} \; g) & =
 q^{i_1} \; \chi_1(g^{-1}) \; w_1^{i_1} \; w_2^{i_2} \dots w_k^{i_k} \; g.
\end{align*}
It is possible to show that $S_\q(V) \# G$ is an $H_q$-module algebra in this case as well. Thus,
again by~\eqref{formula}, $m \circ \mathrm{exp}_q(t \; D_1 \otimes D_2)$ gives a deformation of
$S_\q(V)\#G$. 

If $q = -1$, then as before, $m \circ (\mathrm{id} \otimes \mathrm{id} + t \; D_1 \otimes D_2)$ yields a
deformation which, in this case, can be found to be the quotient of $(T(V)\#G)[[t]]$ by an
ideal that contains the following relations:
\begin{equation} \label{newstuff}
   w_1 w_2 + w_2 w_1 + t \; (-1)^{(\beta_1 + \dots + \beta_{k-2})n} \; w_3^{(\alpha_1+\beta_1)n+1} \; 
   w_4^{(\alpha_2+\beta_2)n} \cdots w_k^{(\alpha_{k-2}+\beta_{k-2})n} \; \sigma_1,
\end{equation}
\[ w_1 w_j + w_j w_1 \quad \text{for } j = 3, \dots, k, \]
\[ w_i w_j - w_j w_i \quad \text{for } i,j = 2, \dots, k. \]
The proof of this statement is analogous to the proof of Proposition~\ref{prop:q-1}. 
For the sake of brevity, we skip the details.
Notice that in this case we do not obtain a graded deformation in the
sense of Braverman and Gaitsgory~\cite{MR1383469} unless 
$\alpha_1 = \dots = \alpha_{k-2} = \beta_1 = \dots = \beta_{k-2} = 0$.
\end{rem}

\section{The General Construction} \label{general}

Let us present some generalizations of the results obtained in~\cite{GUCCIONE}
and~\cite{MR2267580} to the case of $S_\q(V)\#G$. We provide the necessary
and sufficient conditions for $S_\q(V)\#G$ to have the structure of an $H_q$-module
algebra under some assumptions. As a consequence, by applying~\eqref{formula}, 
we obtain more explicit examples of deformations.

\subsection{$H_q$-module Algebra Structures on Arbitrary Algebras} \label{sub1}

Let $A$ be a $\fie$-algebra and let $\sigma, D_1, D_2 : A \rightarrow A$ 
be arbitrary $\fie$-linear maps. By abuse of notation, we identify $\sigma$,
$D_1$ and $D_2$ with the generators of $H_q$ by the same name.  
Then $A$ can be given the structure of an $H_q$-module via a 
(necessarily unique) $\fie$-linear map $H_q \otimes A \rightarrow A$ 
if and only if the maps $\sigma$, $D_1$ and $D_2$ satisfy the following conditions:
\begin{subequations} \label{action}
\begin{equation} \label{eq1}
 \sigma \text{ is a bijective map, }
\end{equation}
\begin{equation} \label{eq2}
 D_1 D_2 = D_2 D_1,
\end{equation}
\begin{equation} \label{eq3}
 q \sigma D_i = D_i \sigma \text{ for } i=1,2, 
\end{equation}
\begin{equation} \label{eq4}
 \text{if } q \text{ is a primitive } n \text{th root of unity, 
 then } D_1^n = D_2^n = 0.
\end{equation}
\end{subequations}

\begin{rem}
 To see why conditions~\eqref{action} are necessary and sufficient, notice that if 
 we are given maps $\sigma$, $D_1$ and $D_2$ as linear operators from $A$ to $A$, there 
 is only one possible way that we can extend this action to all the elements of 
 $H_q$. So if such an extension exists, it is unique. The only question then is 
 whether such an action is well-defined. Since we start with actions of $\sigma$, 
 $D_1$ and $D_2$ on $A$, we would need to check the relations of $H_q$, which are 
 precisely conditions~\eqref{action}.
 \end{rem}

Given $\sigma, D_1$ and $D_2$ for which~\eqref{action} holds, 
the following result determines the conditions that these maps must satisfy so that 
$A$ becomes an $H_q$-module algebra via the map $H_q \otimes A \rightarrow A$.

\begin{thm} \label{thm1}
 Let $\sigma, D_1, D_2 : A \rightarrow A$ be $\fie$-linear maps satisfying~\eqref{action}.
 Then $A$ is an $H_q$-module algebra via the map $H_q \otimes A \rightarrow A$
 if and only if
 \begin{subequations} \label{thm2.4}
 \begin{equation} \label{eq5}
  \sigma(ab) = \sigma(a) \; \sigma(b) \; \text{ for all } a,b \in A,
 \end{equation}
 \begin{equation} \label{eq6}
  D_1(ab) = D_1(a) \; \sigma(b) + a \; D_1(b) \; \text{ for all } a,b \in A,
 \end{equation}
 \begin{equation} \label{eq7}
  D_2(ab) = D_2(a) \; b + \sigma(a) \; D_2(b) \; \text{ for all } a,b \in A.
 \end{equation}
 \end{subequations}
\end{thm}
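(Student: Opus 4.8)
The plan is to prove the biconditional by unwinding the definition of an $H_q$-module algebra, namely conditions~\eqref{conditionstar}, and translating them into statements about $\sigma$, $D_1$, $D_2$ using the explicit coproduct of $H_q$. Recall that $A$ is an $H_q$-module algebra precisely when $h(ab) = \sum h_1(a)\,h_2(b)$ and $h(1_A) = \varepsilon(h)\,1_A$ for all $h \in H_q$ and $a,b \in A$. Since $H_q$ is generated as an algebra by $\sigma$, $\sigma^{-1}$, $D_1$, $D_2$, and since the set of $h$ satisfying the first compatibility condition is closed under products (this is the standard fact that if $h$ and $h'$ are "measuring" in the Hopf-algebraic sense then so is $hh'$, because $\Delta$ is an algebra map), it suffices to check~\eqref{conditionstar} on the generators. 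Likewise, $h(1_A) = \varepsilon(h)\,1_A$ need only be checked on generators, and follows for products automatically.

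The forward direction is then immediate: if $A$ is an $H_q$-module algebra, then applying $h(ab) = \sum h_1(a)\,h_2(b)$ with $h = \sigma$ gives $\sigma(ab) = \sigma(a)\sigma(b)$ from $\Delta(\sigma) = \sigma\otimes\sigma$, yielding~\eqref{eq5}; with $h = D_1$ and $\Delta(D_1) = D_1\otimes\sigma + 1\otimes D_1$ one gets~\eqref{eq6}; and with $h = D_2$ and $\Delta(D_2) = D_2\otimes 1 + \sigma\otimes D_2$ one gets~\eqref{eq7}. For the converse, assume~\eqref{thm2.4}. We already know from~\eqref{action} that the linear maps $\sigma, D_1, D_2$ furnish a well-defined $H_q$-module structure on $A$ (this is exactly the content of the preceding unnumbered paragraph and its remark). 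It remains to check the two compatibility conditions in~\eqref{conditionstar} on the algebra generators of $H_q$. For $\sigma$: $\sigma(ab) = \sigma(a)\sigma(b)$ is~\eqref{eq5}, and $\sigma(1_A) = 1_A = \varepsilon(\sigma)1_A$ follows because $\sigma$ is a bijective algebra-compatible map—more carefully, $\sigma(1_A) = \sigma(1_A \cdot 1_A) = \sigma(1_A)\sigma(1_A)$ forces $\sigma(1_A)$ to be idempotent, and bijectivity of $\sigma$ then forces $\sigma(1_A) = 1_A$. One must also verify the condition for $\sigma^{-1}$; but since $\sigma^{-1}$ is the inverse of an algebra automorphism it is itself an algebra homomorphism fixing $1_A$, and $\varepsilon(\sigma^{-1}) = \varepsilon(\sigma)^{-1} = 1$. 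For $D_1$: condition~\eqref{eq6} is exactly $D_1(ab) = D_1(a)\sigma(b) + a\,D_1(b) = \sum (D_1)_1(a)\,(D_1)_2(b)$ in view of $\Delta(D_1) = D_1\otimes\sigma + 1\otimes D_1$; and $D_1(1_A) = D_1(1_A\cdot 1_A) = D_1(1_A)\sigma(1_A) + 1_A\,D_1(1_A) = 2D_1(1_A)$, so $D_1(1_A) = 0 = \varepsilon(D_1)1_A$. The argument for $D_2$ is symmetric, using~\eqref{eq7} and $\Delta(D_2) = D_2\otimes 1 + \sigma\otimes D_2$.

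The one point requiring genuine care—and the only place where the argument is more than bookkeeping—is the reduction from "check~\eqref{conditionstar} for all $h \in H_q$" to "check it for the generators." This rests on the observation that the collection $\mathcal{M} = \{h \in H_q : h(ab) = \sum h_1(a)h_2(b) \text{ and } h(1_A) = \varepsilon(h)1_A \text{ for all } a,b\}$ is a subalgebra of $H_q$: closure under $\fie$-linear combinations is clear from linearity of $\Delta$ and $\varepsilon$, and closure under multiplication is the computation $(hh')(ab) = h(h'(ab)) = h\big(\sum h'_1(a)h'_2(b)\big) = \sum h_1(h'_1(a))\,h_2(h'_2(b)) = \sum (hh')_1(a)\,(hh')_2(b)$, using that $\Delta(hh') = \Delta(h)\Delta(h')$ and that the $H_q$-action on $A$ is already known to be a genuine module action. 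Since $\mathcal{M}$ contains $\sigma, \sigma^{-1}, D_1, D_2$, it is all of $H_q$. I would state this reduction cleanly at the start of the proof so that the generator-by-generator verification afterward is transparently complete; the rest is the routine matching of $\Delta$ on generators against~\eqref{eq5}--\eqref{eq7}, which I would present compactly rather than in full detail.
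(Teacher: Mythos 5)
Your argument is correct in substance, but it takes a different route from the paper: the paper offers no self-contained proof at all, simply citing \cite[Theorem~2.4]{GUCCIONE} and specializing the map $\alpha$ there to the identity, whereas you reconstruct the underlying argument directly. Your key reduction --- that the set $\mathcal{M}$ of elements $h$ satisfying~\eqref{conditionstar} is a subalgebra of $H_q$ (because $\Delta$ is an algebra map and the action is already a genuine module action), so that~\eqref{conditionstar} need only be checked on $\sigma,\sigma^{-1},D_1,D_2$ --- is the standard ``measuring'' argument and is almost certainly what the cited result rests on; making it explicit is a genuine improvement in readability over the paper's bare citation, and the generator-by-generator matching of $\Delta$ against~\eqref{thm2.4} is exactly right. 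One small imprecision: idempotency of $\sigma(1_A)$ together with bijectivity of $\sigma$ does not by itself force $\sigma(1_A)=1_A$ (a bijective linear map can send $1_A$ to some other idempotent). The correct one-line fix is to use multiplicativity plus surjectivity: $\sigma(a)=\sigma(a)\,\sigma(1_A)$ for all $a$, and surjectivity of $\sigma$ (from~\eqref{eq1}) makes $\sigma(1_A)$ a right identity for all of $A$, hence equal to $1_A$. With that repair, the derivations $D_\ell(1_A)=2D_\ell(1_A)=0$ (valid since $\operatorname{char}\fie=0$) and the treatment of $\sigma^{-1}$ go through as you state, and the proof is complete.
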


 \begin{proof}
  See~\cite[Theorem~$2.4$]{GUCCIONE} and set $\alpha$ to be the identity map on $A$.
 \end{proof}


\subsection{$H_q$-module Algebra Structures on Smash Products} \label{sub2}

In what follows, $\mathrm{dim}(V) = k \geq 3$. Assume that the action of $G$
on $V$ is diagonal with respect to the basis $\{w_1, \dots, w_k\}$. Then
there exist maps $\chi_i : G \rightarrow \fie^\times$ such that
\[ g(w_i) = \chi_i(g) \; w_i \quad \text{ for all } g \in G, \; i = 1, \dots, k. \]
Extend the action of $G$ on $V$ to $S_\q(V)$ by algebra automorphisms. 

\begin{thm} \label{thm2}
 Let $\sigma, D_1, D_2 : V \oplus \fie G \rightarrow S_\q(V)\#G$ be $\fie$-linear maps.
 Suppose there exists a group homomorphism $\xi : G \rightarrow \fie^\times$ such that
 \[ \sigma(g) = \xi(g) \; g \quad \text{ for all } g \in G. \]
 Then $\sigma, D_1, D_2$ extend to give $S_\q(V)\#G$ the structure of an $H_q$-module algebra
 if and only if for all $g \in G$, $\ell = 1,2$, $i,j = 1,\dots,k$, 
 the following conditions hold:
\begin{subequations} \label{thm3.5}
\begin{equation} \label{EQ1}
\sigma: V \rightarrow V \text{ is a bijective } \fie G\text{-linear map, }
\end{equation}
\begin{equation} \label{EQ2}
D_1 D_2(w_i) = D_2 D_1(w_i),
\end{equation}
\begin{equation} \label{EQ3}
 D_1 D_2(g) = D_2 D_1(g),
\end{equation}
\begin{equation} \label{EQ4}
q \sigma D_\ell(w_i) = D_\ell \sigma(w_i),
\end{equation}
\begin{equation} \label{EQ5}
q \sigma D_\ell(g) = \xi(g) \; D_\ell(g),
\end{equation}
\begin{equation} \label{EQ6}
\text{if } q \text{ is a primitive } n \text{th root of unity,
then } D_1^n = D_2^n = 0,
\end{equation}
\begin{equation} \label{EQ7}
D_\ell(w_i w_j) = D_\ell(q_{ij} w_j w_i),
\end{equation}
\begin{equation} \label{EQ8}
\sigma(w_i w_j) = \sigma(q_{ij} w_j w_i),
\end{equation}
\begin{equation} \label{EQ9}
D_\ell(g(w_i) g) = D_\ell(g w_i).
\end{equation}
\end{subequations}
\end{thm}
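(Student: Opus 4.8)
The plan is to reduce Theorem~\ref{thm2} to Theorem~\ref{thm1} applied with $A = S_\q(V)\#G$. The maps $\sigma, D_1, D_2$ are initially only prescribed on the subspace $V \oplus \fie G$; the first task is to show that, under the stated hypotheses, they extend uniquely to linear operators on all of $S_\q(V)\#G$ in such a way that conditions~\eqref{action} and~\eqref{thm2.4} hold, since once that is established Theorem~\ref{thm1} immediately gives the $H_q$-module algebra structure. So I would first argue the ``if'' direction: assuming \eqref{thm3.5}, define the extensions of $\sigma$, $D_1$, $D_2$ to $S_\q(V)\#G$ by forcing \eqref{eq5}, \eqref{eq6}, \eqref{eq7} to hold, i.e. declare $\sigma$ to be multiplicative, $D_1$ a $\sigma,1$-skew derivation and $D_2$ a $1,\sigma$-skew derivation. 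The content is that this is \emph{well-defined}: one must check that the defining relations of $S_\q(V)\#G$ — namely $w_iw_j - q_{ij}w_jw_i$, the group relations of $G$, and the cross relations $g w_i = g(w_i)\, g$ — are respected. These are exactly what \eqref{EQ7}, \eqref{EQ8}, \eqref{EQ9} (together with $\xi$ being a homomorphism, which handles the group relations) are there to guarantee. Then one checks \eqref{eq1}--\eqref{eq4}: bijectivity of $\sigma$ on all of $S_\q(V)\#G$ follows from \eqref{EQ1} plus multiplicativity (on a monomial basis $\sigma$ acts by an invertible scalar times an automorphism of $G$); and \eqref{eq2}, \eqref{eq3}, \eqref{eq4} need only be verified on algebra generators, since both sides of each identity are ($q$-twisted) derivations or powers thereof — this is where \eqref{EQ2}, \eqref{EQ3}, \eqref{EQ4}, \eqref{EQ5}, \eqref{EQ6} enter. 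The standard lemma that a skew derivation is determined by, and can be checked on, generators is the workhorse here.

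For the ``only if'' direction, suppose $S_\q(V)\#G$ is an $H_q$-module algebra via an extension of the given $\sigma, D_1, D_2$. By Theorem~\ref{thm1} the global identities \eqref{eq1}--\eqref{eq7} hold, and one simply specializes them to generators: \eqref{eq5} on $w_i, w_j$ gives \eqref{EQ8}; \eqref{eq6} and \eqref{eq7} applied to the relation $w_iw_j = q_{ij}w_jw_i$ and to $g w_i = g(w_i) g$ yield \eqref{EQ7} and \eqref{EQ9}; \eqref{eq2} on $w_i$ and on $g$ gives \eqref{EQ2}, \eqref{EQ3}; \eqref{eq3} on $w_i$ gives \eqref{EQ4}, and on $g$, using $\sigma(g) = \xi(g)g$, gives \eqref{EQ5}; \eqref{eq4} gives \eqref{EQ6}. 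Finally \eqref{EQ1}: since $\sigma$ restricted to $V$ must land in $S_\q(V)\#G$ but also be bijective and multiplicative-compatible with the $G$-action, one shows $\sigma(V)\subseteq V$ and that $\sigma|_V$ commutes with the $G$-action (i.e. is $\fie G$-linear), deducing it from \eqref{eq3} or from degree considerations — $\sigma$ preserves the grading by polynomial degree because $\sigma(g)=\xi(g)g$ has degree $0$ and $\sigma$ is multiplicative, so $\sigma(w_i)$, being forced into the degree-$1$ part $V\#\fie G$, must in fact lie in $V$ after accounting for the $G$-equivariance forced by \eqref{eq3}.

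The main obstacle I expect is the well-definedness of the extensions in the ``if'' direction, specifically verifying that the skew-derivation extensions of $D_1$ and $D_2$ kill the relations of $S_\q(V)\#G$. One has to be careful because $S_\q(V)\#G$ is a quotient of $T(V)\#G$ by a two-sided ideal, and a skew derivation of the tensor algebra descends to the quotient only if it sends the ideal generators into the ideal — and because $D_\ell$ is a \emph{twisted} derivation involving $\sigma$, the Leibniz computation of $D_\ell(w_iw_j - q_{ij}w_jw_i)$ produces cross-terms mixing $D_\ell(w_i)\sigma(w_j)$ with $w_i D_\ell(w_j)$, and the vanishing modulo the ideal is a genuine identity encoded in \eqref{EQ7}; likewise the interaction with the cross relations $gw_i = \chi_i(g)w_ig$ must be reconciled with how $D_\ell$ acts on $G$ and on $V$, which is \eqref{EQ9}. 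Organizing this cleanly — perhaps by first extending to $T(V)\#G$ and then showing the ideal is preserved — is the crux; everything else is bookkeeping, reducing global identities to generators via the derivation property, which I would state once as a lemma and reuse.
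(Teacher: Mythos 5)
Your proposal is correct and follows essentially the same route as the paper's proof: reduce to Theorem~\ref{thm1} with $A = S_\q(V)\#G$, extend $\sigma, D_1, D_2$ by forcing conditions~\eqref{thm2.4}, verify well-definedness of the extensions via~\eqref{EQ7}--\eqref{EQ9}, and check the relations of $H_q$ on generators via~\eqref{EQ1}--\eqref{EQ6}. If anything, you are more explicit than the paper about the crux (that the twisted-derivation extensions must preserve the defining ideal of $S_\q(V)\#G$), which the paper asserts without detail.
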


\begin{proof}
 Notice that $V \oplus \fie G$ embeds into $S_\q(V)\#G$, so in order to apply Theorem~\ref{thm1}, 
 first we need to extend the maps $\sigma$, $D_1$ and $D_2$ from $V \oplus \fie G$ to all of $S_\q(V)\#G$
 by requiring conditions~(\ref{thm2.4}) to hold.
 In order to obtain well-defined $\fie$-linear maps from $S_\q(V)\#G$ to $S_\q(V)\#G$, we need to 
 check that the relations of $S_\q(V)\#G$ are satisfied by the generators of $H_q$. These are 
 precisely conditions~\eqref{EQ7}, \eqref{EQ8} and~\eqref{EQ9}. Notice that $\sigma$ automatically
 preserves the relation $g(w_i) g = g w_i$ since by assumption, 
 \[ g(w_i) = \chi_i(g) \; w_i \quad \text{and} \quad \sigma(g) = \xi(g) \; g \quad
    \text{ for all } g \in G, \; i = 1,\dots,k, \] 
 and by~\eqref{EQ1}, $\sigma : V \rightarrow V$ commutes with the action of $G$, that is 
 \[ \sigma(g(w_i)) = g(\sigma(w_i)) \quad \text{ for all } g \in G, \; i=1,\dots,k. \] 
 Next, we need to make sure that the relations of $H_q$ hold, that is items~\eqref{action}. 
 Notice that~\eqref{eq1} is equivalent to~\eqref{EQ1} since $\sigma$ is bijective on $\fie G$ 
 by its definition. Moreover, \eqref{eq4} is precisely~\eqref{EQ6}. Since it is enough to show that the
 relations of $H_q$ are satisfied by the generators of $S_\q(V)\#G$, it follows that~\eqref{eq2} is
 equivalent to~\eqref{EQ2} and \eqref{EQ3}, and~\eqref{eq3} is equivalent to~\eqref{EQ4} and~\eqref{EQ5}.
\end{proof}

\begin{rem}
 This result was obtained for the case of $S(V)\#G$ in~\cite{GUCCIONE}. Thus,
 Theorem~\ref{thm2} is a generalization of~\cite[Theorem~$3.5$]{GUCCIONE} to $S_\q(V)\#G$ 
 with $\alpha$ being the identity map on $S_\q(V)\#G$, $s$ the twist map, 
 $\chi_\alpha(g) = 1$ and $\chi_\varsigma(g) = \chi_1(g^{-1})$ for all $g \in G$.
\end{rem}

\subsection{A Special Case of $H_q$-module Algebra Structures on 
Smash Products} \label{sub3}

The following result makes it easier to come up with more explicit examples of deformations
of $S_\q(V)\#G$. The setting is as in Subsection~\ref{sub2}.

\begin{thm} \label{thm3}
Let $\sigma, D_1, D_2 : V \oplus \fie G \rightarrow S_\q(V)\#G$ be $\fie$-linear maps.
Assume that the action of $\sigma$ on $V$ is diagonal with respect to the basis 
$\{w_1, \dots, w_k\}$. Let $\lambda_i(\sigma) \in \fie^\times$ be defined by
\[ \sigma(w_i) = \lambda_i(\sigma) \; w_i \quad \text{ for all } i = 1,\dots,k. \]
Suppose there exists a group homomorphism $\xi : G \rightarrow \fie^\times$ such that
\[ \sigma(g) = \xi(g) \; g \; \text{ for all } g \in G. \]
Choose $P_1, P_2 \in S_\q(V)$ such that there are scalars $q_{P_i,w_j}$ satisfying
the following equation:
\[ P_i \; w_j = q_{P_i,w_j} \; w_j \; P_i \; \text{ for all } i \neq j. \]
Assume that
\[ D_1(w_1) = P_1 \; g_1, \quad D_1(w_i) = 0 \; \text{ for all } i \neq 1, 
   \quad D_1(g) = 0 \; \text{ for all } g \in G, \]
\[ D_2(w_2) = P_2 \; g_2, \quad D_2(w_i) = 0 \; \text{ for all } i \neq 2,
   \quad D_2(g) = 0 \; \text{ for all } g \in G, \]
with $g_1, g_2 \in G$.
Then there is an $H_q$-module algebra structure on $S_\q(V)\#G$, for which $\sigma, D_1, D_2$
act as the above chosen maps, if and only if
\begin{subequations} \label{thm3.6}
\begin{equation} \label{EQU2}
q_{P_1,w_i} = q_{1i} \; \lambda_i^{-1}(\sigma) \; \chi_i(g_1^{-1}) \; \text{ for all } i \neq 1,
\end{equation}
\begin{equation} \label{EQU3}
q_{P_2,w_i} = q_{2i} \; \lambda_i(\sigma) \; \chi_i(g_2^{-1}) \; \text{ for all } i \neq 2,
\end{equation}
\begin{equation} \label{EQU4}
g_1 \text{ and } g_2 \text{ belong to the center of } G,
\end{equation}
\begin{equation} \label{EQU5}
g(P_1) = \chi_1(g) \; \xi(g) \; P_1 \text{ for all } g \in G,
\end{equation}
\begin{equation} \label{EQU6}
g(P_2) = \chi_2(g) \; \xi(g^{-1}) \; P_2 \text{ for all }g \in G,
\end{equation}
\begin{equation} \label{EQU7}
P_1 \in \mathrm{ker}(D_2) \text{ and } P_2 \in \mathrm{ker}(D_1),
\end{equation}
\begin{equation} \label{EQU8}
\sigma(P_i) = q^{-1} \; \lambda_i(\sigma) \; \xi(g_i^{-1}) \; P_i \text{ for } i = 1,2,
\end{equation}
\begin{equation} \label{EQU9}
\text{if } q \text{ is a primitive } n \text{th root of unity, then } D_1^n = D_2^n = 0.
\end{equation}
\end{subequations}
\end{thm}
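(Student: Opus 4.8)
The plan is to invoke Theorem~\ref{thm2}, whose nine conditions~\eqref{thm3.5} are already known to be necessary and sufficient for $\sigma, D_1, D_2$ to extend to an $H_q$-module algebra structure on $S_\q(V)\#G$, and then to translate each of those conditions into the concrete language of the present hypotheses, where $D_1(w_1) = P_1 g_1$, $D_2(w_2) = P_2 g_2$, and everything else vanishes. So the proof is essentially a bookkeeping reduction: show that, under the standing assumptions of this theorem, the abstract system~\eqref{EQ1}--\eqref{EQ9} is equivalent to the explicit system~\eqref{EQU2}--\eqref{EQU9}. I would organize it by going through~\eqref{thm3.5} one equation at a time.

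First I would dispose of the easy ones. Condition~\eqref{EQ1} holds automatically because $\sigma$ is assumed diagonal on $V$ with nonzero eigenvalues $\lambda_i(\sigma)$ (hence bijective), and $\fie G$-linearity of $\sigma|_V$ holds since the $G$-action is diagonal too — diagonal operators on a space with a fixed eigenbasis commute. Condition~\eqref{EQ3} is trivial since $D_1(g) = D_2(g) = 0$ for all $g$, so both sides are $0$. Condition~\eqref{EQ6} is literally~\eqref{EQU9}. Condition~\eqref{EQ8} ($\sigma(w_i w_j) = \sigma(q_{ij}w_j w_i)$) follows from $\sigma$ being multiplicative on the chosen generators together with the defining relation $w_i w_j = q_{ij} w_j w_i$ of $S_\q(V)$ and the eigenvalue computation $\sigma(w_i)\sigma(w_j) = \lambda_i(\sigma)\lambda_j(\sigma) w_i w_j$ — it is automatic, no new constraint. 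The genuinely substantive translations are the remaining five.

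For~\eqref{EQ4} with $\ell=1$: using $\sigma D_1(w_1) = \sigma(P_1 g_1)$ and $D_1\sigma(w_1) = \lambda_1(\sigma) D_1(w_1) = \lambda_1(\sigma) P_1 g_1$, and evaluating $\sigma(P_1 g_1) = \sigma(P_1)\,\xi(g_1)\,g_1$, the equation $q\sigma(P_1)\xi(g_1) = \lambda_1(\sigma) P_1$ becomes exactly~\eqref{EQU8} for $i=1$; the $i=2$ case comes out the same way from $\ell=2$ on $w_2$, while $\ell=1$ on $w_i$ ($i\neq 1$) and $\ell=2$ on $w_i$ ($i\neq 2$) give $0=0$. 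Condition~\eqref{EQ5} ($q\sigma D_\ell(g) = \xi(g) D_\ell(g)$) is vacuous since $D_\ell(g)=0$. Condition~\eqref{EQ7} ($D_\ell(w_i w_j) = D_\ell(q_{ij} w_j w_i)$) is the heart of the matter: applying the skew-Leibniz rules~\eqref{eq6},~\eqref{eq7} to both sides, most terms die because $D_1$ and $D_2$ kill all $w_i$ except $w_1$ and $w_2$ respectively. The surviving identity for $D_1$ with the pair $(w_1, w_i)$, $i\neq 1$, reads $D_1(w_1)\sigma(w_i) = q_{1i} D_1(w_i\, ?) \cdots$ — carefully, $D_1(w_1 w_i) = D_1(w_1)\sigma(w_i) + w_1 D_1(w_i) = P_1 g_1 \cdot \lambda_i(\sigma) w_i$, which after moving $g_1$ past $w_i$ (using $g_1 w_i = \chi_i(g_1) w_i g_1$, equivalently $w_i = \chi_i(g_1^{-1}) g_1^{-1} w_i g_1$ manipulations) and $P_1 w_i = q_{P_1,w_i} w_i P_1$ must equal $q_{1i} D_1(w_i w_1) = q_{1i}(D_1(w_i)\sigma(w_1) + w_i D_1(w_1)) = q_{1i} w_i P_1 g_1$; matching coefficients yields~\eqref{EQU2}. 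The analogous computation for $D_2$ with $(w_2, w_i)$ yields~\eqref{EQU3}, and the pair $(w_1, w_2)$ applied to $D_2$ forces $w_1$ to commute appropriately with $P_2 g_2$, which — combined with the requirement that $D_2(w_1 w_2)$ be consistent — produces $P_2 \in \ker D_1$ and the centrality half of~\eqref{EQU4}; symmetrically $D_1$ on $(w_1, w_2)$ gives $P_1 \in \ker D_2$, the rest of~\eqref{EQU7}, and the $g_1$-centrality half of~\eqref{EQU4}. Condition~\eqref{EQ9} ($D_\ell(g(w_i) g) = D_\ell(g w_i)$, i.e.\ compatibility with the smash relation $g(w_i)g = g w_i$ in $T(V)\#G$) expands via Leibniz: the left side is $D_\ell(\chi_i(g) w_i g) = \chi_i(g) D_\ell(w_i g)$, and comparing with $D_\ell(g w_i) = D_\ell(g)(\cdots) + (\cdots) D_\ell(w_i)$ — since $D_\ell(g)=0$ — boils down (for $\ell=1$, $i=1$) to $\chi_1(g) D_1(w_1)\sigma(g) = \sigma(g) D_1(w_1)$, i.e.\ $\chi_1(g)\xi(g) P_1 g_1 g = \xi(g) g P_1 g_1$; using $g_1$ central and $g P_1 = g(P_1) g$ this rearranges to $g(P_1) = \chi_1(g)\xi(g) P_1$, which is~\eqref{EQU5}, and the $\ell=2$, $i=2$ case gives~\eqref{EQU6}. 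Finally~\eqref{EQ2} ($D_1 D_2(w_i) = D_2 D_1(w_i)$): $D_1 D_2(w_i) = 0$ unless $i=2$, where it is $D_1(P_2 g_2)$; $D_2 D_1(w_i) = 0$ unless $i=1$, where it is $D_2(P_1 g_1)$; so we need $D_1(P_2 g_2) = 0$ and $D_2(P_1 g_1) = 0$, and since $D_1(g_2)=0$ the Leibniz rule gives $D_1(P_2 g_2) = D_1(P_2)\sigma(g_2)$, forcing $D_1(P_2)=0$, i.e. $P_2 \in \ker D_1$ (matching~\eqref{EQU7}), and symmetrically $P_1 \in \ker D_2$.

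The main obstacle I anticipate is purely organizational rather than conceptual: keeping track of which of the abstract conditions each new constraint comes from (several of~\eqref{EQU2}--\eqref{EQU9} are forced redundantly by more than one of~\eqref{EQ2},~\eqref{EQ7},~\eqref{EQ9}), and being careful with the noncommutative reshuffling — every time $P_i$ is pushed past a $w_j$ one picks up $q_{P_i,w_j}$, every time $g$ is pushed past $w_j$ one picks up $\chi_j(g^{\pm1})$, and every time $\sigma$ hits something one picks up $\lambda_i(\sigma)$, $\xi(g)$, or a factor $q$ from~\eqref{eq3} — so the scalar identities~\eqref{EQU2},~\eqref{EQU3},~\eqref{EQU8} must be assembled from these pieces in exactly the right order. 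I would present the forward direction (structure $\Rightarrow$ conditions) and the converse simultaneously by showing each of~\eqref{EQ1}--\eqref{EQ9} is \emph{equivalent} (given the standing hypotheses) to the conjunction of the corresponding subset of~\eqref{EQU2}--\eqref{EQU9}, so that Theorem~\ref{thm2} then closes the argument in one line. For brevity I expect to carry out one representative case of each distinct type — say~\eqref{EQ7} for $D_1$ on $(w_1,w_i)$ and on $(w_1,w_2)$, and~\eqref{EQ9} for $D_1$ on $w_1$ — and indicate that the remaining cases are obtained by the same manipulations with the roles of $(D_1, w_1, g_1, P_1)$ and $(D_2, w_2, g_2, P_2)$ interchanged.
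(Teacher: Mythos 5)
Your proposal follows essentially the same route as the paper: reduce to Theorem~\ref{thm2} and verify, case by case, that each of conditions~\eqref{EQ1}--\eqref{EQ9} is equivalent under the standing hypotheses to the corresponding subset of~\eqref{EQU2}--\eqref{EQU9}, with all the substantive computations you outline matching the paper's. Two small bookkeeping slips to fix in the writeup: the centrality of $g_1,g_2$ and the kernel conditions~\eqref{EQU7} do not come from~\eqref{EQ7} on the pair $(w_1,w_2)$ (that case only reproduces the $i=1,2$ instances of~\eqref{EQU2}--\eqref{EQU3}); they come from comparing group components in~\eqref{EQ9} and from~\eqref{EQ2}, as you in fact derive correctly later; and in your~\eqref{EQ9} computation for $\ell=1$ the right-hand side is $g\,D_1(w_1)$, not $\sigma(g)\,D_1(w_1)$, since the $\sigma,1$-Leibniz rule~\eqref{eq6} places $\sigma$ on the second factor --- it is the $\sigma(g)$ appearing on the \emph{left}-hand side $\chi_1(g)D_1(w_1)\sigma(g)$ that produces the factor $\xi(g)$ in~\eqref{EQU5}.
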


\begin{proof}
 Notice that condition~\eqref{EQU9} is exactly~\eqref{EQ6}. 
 Since $\sigma(w_i) = \lambda_i(\sigma) w_i$ for all $i=1,\dots,k,$
 where $\lambda_i(\sigma) \in \fie^\times$, it follows that $\sigma : V \rightarrow V$ is 
 bijective. Since, in addition, $g(w_i) = \chi_i(g) w_i$ for all $g \in G, \; i = 1,\dots,k$, 
 we have that $\sigma : V \rightarrow V$ is a $\fie G$-linear map. Thus, condition~\eqref{EQ1} 
 is satisfied.
 Also note that condition~\eqref{EQ8} is satisfied by the assumption that
 $\sigma(w_i) = \lambda_i(\sigma) w_i$ for all $i=1,\dots,k$.
 Since by assumption $D_1(g) = D_2(g) = 0$ for all $g \in G$, conditions~\eqref{EQ3} and~\eqref{EQ5} hold.

 We claim that conditions~\eqref{EQU2} and~\eqref{EQU3} are equivalent to~\eqref{EQ7}. For $i \neq 1$,
 \begin{align*}
 D_1(w_1 \; w_i) 
 & = D_1(w_1) \; \sigma(w_i) + w_1 \; D_1(w_i) = D_1(w_1) \; \sigma(w_i) = P_1 \; g_1 \; \lambda_i(\sigma) \; w_i \\
 & = \lambda_i(\sigma) \; P_1 \; g_1(w_i) \; g_1 = \lambda_i(\sigma) \; \chi_i(g_1) \; P_1 \; w_i \; g_1
 \end{align*}
 and
 \begin{align*}
 D_1(q_{1i} \; w_i \; w_1) 
 & = q_{1i} \; D_1(w_i) \; \sigma(w_1) + q_{1i} \; w_i \; D_1(w_1) = q_{1i} \; w_i \; D_1(w_1) \\
 & = q_{1i} \; w_i \; P_1 \; g_1 = q_{1i} \; q_{P_1,w_i}^{-1} \; P_1 \; w_i \; g_1
 \end{align*}
 Therefore, $D_1(w_1 \; w_i) = D_1(q_{1i} \; w_i \; w_1)$ is equivalent to 
 \[ q_{P_1,w_i} = q_{1i} \; \lambda_i^{-1}(\sigma) \; \chi_i(g_1^{-1}) \; \text{ for all } i \neq 1. \]
 Similarly, for $i \neq 2$,
 \begin{align*}
 D_2(w_2 \; w_i) 
 & = D_2(w_2) \; w_i + \sigma(w_2) \; D_2(w_i) = D_2(w_2) \; w_i = P_2 \; g_2 \; w_i \\
 & = P_2 \; g_2(w_i) \; g_2 = \chi_i(g_2) \; P_2 \; w_i \; g_2
 \end{align*}
 and
 \begin{align*}
 D_2(q_{2i} \; w_i \; w_2)
 & = q_{2i} \; D_2(w_i) \; w_2 + q_{2i} \; \sigma(w_i) \; D_2(w_2) = q_{2i} \; \sigma(w_i) \; D_2(w_2) \\
 & = q_{2i} \; \lambda_i(\sigma) \; w_i \; P_2 \; g_2 = q_{2i} \; \lambda_i(\sigma) \; q_{P_2,w_i}^{-1} \; P_2 \; w_i \; g_2
 \end{align*}
 Therefore, $D_2(w_2 \; w_i) = D_2(q_{2i} \; w_i \; w_2)$ is equivalent to
 \[ q_{P_2,w_i} = q_{2i} \; \lambda_i(\sigma) \; \chi_i(g_2^{-1}) \; \text{ for all } i \neq 2. \]

 We claim that conditions~\eqref{EQU4}, \eqref{EQU5} and~\eqref{EQU6} are equivalent to~\eqref{EQ9}.
 Since $D_1(g) = 0$ for all $g \in G$, we have
 \[ D_1(g(w_i) \; g) 
    = D_1(g(w_i)) \; \sigma(g) + g(w_i) \; D_1(g)
    = D_1(g(w_i)) \; \sigma(g)
    = \chi_i(g) \; \xi(g) \; D_1(w_i) \; g \]
 and
 \[ D_1(g \; w_i) = D_1(g) \; \sigma(w_i) + g \; D_1(w_i) = g \; D_1(w_i). \]
 Thus, it is enough to show that 
 \[ \chi_i(g) \; \xi(g) \; D_1(w_i) \; g = g \; D_1(w_i) \]
 holds for all $g \in G, \; i = 1,\dots,k,$
 if and only if conditions~\eqref{EQU4} and~\eqref{EQU5} are satisfied. \\
 There are two possible cases, namely $i = 1$ or $i \neq 1$. If $i = 1$, then
 \[ \chi_1(g) \; \xi(g) \; D_1(w_1) \; g = g \; D_1(w_1) \] 
 simplifies to
 \[ \chi_1(g) \; \xi(g) \; P_1 \; g_1 \; g = g \; P_1 \; g_1. \] 
 This is equivalent to
 \[ \chi_1(g) \; \xi(g) \; P_1 \; g_1 \; g = g(P_1) \; g \; g_1, \] 
 which holds if and only if $g_1$ belongs to the center of $G$ and 
 $g(P_1) = \chi_1(g) \; \xi(g) \; P_1$ for all $g \in G$.
 If $i \neq 1$, then $D_1(w_i) = 0$ so both sides of the equation 
 are equal to zero.

 Similarly, since $D_2(g) = 0$ for all $g \in G$, we have
 \[ D_2(g(w_i) \; g) = D_2(g(w_i)) \; g + \sigma(g(w_i)) \; D_2(g)
    = D_2(g(w_i)) \; g = \chi_i(g) \; D_2(w_i) \; g \]
 and
 \[ D_2(g \; w_i) = D_2(g) \; w_i + \sigma(g) \; D_2(w_i) = \sigma(g) \; D_2(w_i) = 
    \xi(g) \; g \; D_2(w_i). \]
 Thus, it is enough to show that 
 \[ \chi_i(g) \; D_2(w_i) \; g = \xi(g) \; g \; D_2(w_i) \]
 holds for all $g \in G, \; i = 1,\dots,k,$
 if and only if conditions~\eqref{EQU4} and~\eqref{EQU6} are satisfied. \\
 Again, there are two possible cases, namely $i = 2$ or $i \neq 2$. If $i = 2$, then
 \[ \chi_2(g) \; D_2(w_2) \; g = \xi(g) \; g \; D_2(w_2) \]
 simplifies to
 \[ \chi_2(g) \; P_2 \; g_2 \; g = \xi(g) \; g \; P_2 \; g_2. \]
 This is equivalent to
 \[ \xi(g^{-1}) \; \chi_2(g) \; P_2 \; g_2 \; g = g(P_2) \; g \; g_2, \]
 which holds if and only if $g_2$ belongs to the center of $G$ and 
 $g(P_2) = \xi(g^{-1}) \; \chi_2(g) \; P_2$ for all $g \in G$.
 If $i \neq 2$, then $D_2(w_i) = 0$ so both sides of the equation 
 are equal to zero.

 We claim that condition~\eqref{EQU7} is equivalent to~\eqref{EQ2}.
 To see this, notice that $D_1 D_2(w_i) = D_2 D_1(w_i)$ is trivially satisfied for 
 $w_i \in \mathrm{ker}(D_1) \cap \mathrm{ker}(D_2)$. Since $D_1 D_2(w_1) = 0$ and
 \[ D_2 D_1(w_1) = D_2(P_1 \; g_1) = D_2(P_1) \; g_1 + \sigma(P_1) \; D_2(g_1) =  D_2(P_1) \; g_1, \]
 we have that $D_1 D_2(w_1) = D_2 D_1(w_1)$ holds if and only if $P_1 \in \mathrm{ker}(D_2)$. \\
 Similarly, since $D_2 D_1(w_2) = 0$ and
 \[ D_1 D_2(w_2) =  D_1(P_2 \; g_2) = D_1(P_2) \; \sigma(g_2) + P_2 \; D_1(g_2) = D_1(P_2) \; \sigma(g_2), \]
 we have that $D_1 D_2(w_2) = D_2 D_1(w_2)$ holds if and only if $P_2 \in \mathrm{ker}(D_1)$.

 We claim that condition~\eqref{EQU8} is equivalent to~\eqref{EQ4}.
 To see this, consider the following: For $i=1,2$, we have
 \[ D_i \sigma(w_i) = D_i(\lambda_i(\sigma) \; w_i)
    = \lambda_i(\sigma) \; D_i(w_i)
    = \lambda_i(\sigma) \; P_i \; g_i \]
 and
 \[ q \sigma D_i(w_i)
    = q \; \sigma(P_i \; g_i)
    = q \; \sigma(P_i) \; \sigma(g_i)
    = q \; \xi(g_i) \; \sigma(P_i) \; g_i. \]
 Therefore, $D_i \sigma(w_i) = q \sigma D_i(w_i)$ holds if and only if 
 $\sigma(P_i) = q^{-1} \; \lambda_i(\sigma) \; \xi(g_i^{-1}) \; P_i$ for $i=1,2$.
 On the other hand, for $i \neq 1,2$, we have $D_1(w_i) = 0 = D_2(w_i)$ so 
 both sides of the equation 
 \[ D_j \sigma(w_i) = q \sigma D_j(w_i) \; \text{ for } j=1,2, \; i = 1,\dots,k, \]
 are zero.
\end{proof}

\begin{rem}
 The idea behind this result comes from~\cite{GUCCIONE}, which deals with the case 
 of $S(V)\#G$. Thus, Theorem~\ref{thm3.6} is a generalization 
 of~\cite[Theorem~$3.6$]{GUCCIONE} to $S_\q(V)\#G$ with $f(g,h) = 1$ for all $g,h \in G$, 
 $\lambda_{1g} = \chi_1(g)$, $\lambda_{2g} = \chi_2(g)$ for all $g \in G$, $\nu_1 = 1$, 
 and $\nu_2 = 1$, but with new assumptions on $\sigma$, $P_1$ and $P_2$.
\end{rem}

\begin{rem} \label{rem:exthm}
 If we set 
 \[ k = 3, \; P_1 = 1, \; P_2 = w_3, \; g_1 = \sigma_2, \; g_2 = \sigma_1 \; \sigma_2^{-1}, \]
 \[ \lambda_1(\sigma) = q, \; \lambda_2(\sigma) = \lambda_3(\sigma) = 1, \]
 \[ q_{P_1,w_2} = q_{P_1,w_3} = q_{P_2,w_3} = 1, \; q_{P_2,w_1} = q^{-1}, \] 
 then we can see that we can apply Theorem~\ref{thm3} to the motivational example 
 presented in Section~\ref{ch4}.\ref{motivation}. We may also apply Theorem~\ref{thm3} 
 to the generalization of the motivational example constructed in Remark~\ref{rem:generalex} 
 if we set 
 \[ P_1 = w_3^{\alpha_1 n} \; w_4^{\alpha_2 n} \cdots w_k^{\alpha_{k-2} n}, \;
 P_2 = w_3^{\beta_1 n + 1} \; w_4^{\beta_2 n} \cdots w_k^{\beta_{k-2} n}, \;
 g_1 = \sigma_2, \; g_2 = \sigma_1 \; \sigma_2^{-1}, \] 
 \[ \lambda_1(\sigma) = q, \; \lambda_i(\sigma) = 1 \; \text{ for } i=2,\dots,k, \]
 \[ q_{P_1,w_i} = 1 \; \text{ for } i=2,\dots,k, \]
 \[ q_{P_2,w_1} = q^{-(\beta_1 n + 1 + \beta_2 n + \dots + \beta_{k-2} n)}, \; 
    q_{P_2,w_i} = 1 \; \text{ for } i=3,\dots,k. \]
\end{rem}

Recall the multiplication in the deformed algebra given in Definition~\ref{def:deformation}.
If, in the setting of Theorem~\ref{thm3}, we want to find, for instance, an expression for 
the map $\mu_1$, then we can proceed as follows: By~\eqref{formula}, we have that
$m \circ \mathrm{exp}_q(t \; D_1 \otimes D_2)$
gives a deformation of $S_\q(V)\#G$, where $m$ is the multiplication in $S_\q(V)\#G$. 
As a consequence, 
\begin{equation} \label{mu1}
 \mu_1 = m \circ (D_1 \otimes D_2).
\end{equation}

\begin{rem} \label{rem:simplify}
 The assumptions made in Theorem~\ref{thm2} and Theorem~\ref{thm3} are mainly for simplifying purposes.
 We believe that it might be possible to prove these same results in a more general setting. This is a 
 subject for future research.
\end{rem}

\section{Nontriviality of the Deformations} \label{ch5}
Once we have found a formal deformation of an algebra, it may turn out that
this deformation is trivial, in the sense that it is isomorphic to the
formal power series ring with coefficients in the original algebra. To verify
that we have indeed obtained a new object, we must prove that such an 
isomorphism does not exist, which may be difficult to do directly.
Using Hochschild cohomology makes this easier to show.

The main goal of this section is to prove that the deformations of $S_\q(V)\#G$
obtained in Section~\ref{general} are not isomorphic to $(S_\q(V)\#G)[[t]]$. We begin
by briefly describing the connection between algebraic deformation theory and Hochschild
cohomology. Then we give the precise characterization of the infinitesimal of 
the deformations that result from Theorem~\ref{thm3} and~\eqref{formula}. 
As we will see, this characterization suffices to prove the nontriviality of 
the resulting deformations.

\subsection{Deformations and Hochschild Cohomology} \label{sub5.1}

The deformations of any algebra are intimately related to its Hochschild
cohomology. According to~\cite{MR2762537}, Gerstenhaber's works~\cite{MR0161898} 
and~\cite{MR0171807} marked the beginning 
of the study of the connection between Hochschild cohomology and 
algebraic deformation theory. He showed that for an algebra $A$, the 
space $\mathrm{HH}^i(A)$ with $i \leq 3$ has a natural
interpretation related to the maps $\mu_i$ and the obstructions to the existence
of the maps $\mu_i$. In particular, the map $\mu_1$ is a Hochschild $2$-cocycle 
and the obstructions to the existence of the maps $\mu_i$ for $i \geq 2$ 
are Hochschild $3$-cocycles.

The relation between (quantum) Drinfeld Hecke algebras and Hochschild 
cohomology is discussed in~\cite{MR2395161} and~\cite{NAIDUWITH}. 
Roughly speaking, quantum Drinfeld Hecke algebras are generalizations 
of Drinfeld Hecke algebras in which polynomial rings are replaced by 
quantum polynomial rings (see~\cite{NAIDUWITH} and~\cite{LEVANDOVSKYY}
for a precise definition).

As shown in~\cite[Section $4$]{MR2762537}, if we can prove that 
the Hochschild cocycle $\mu_1$ represents a nonzero element in 
the Hochschild cohomology ring, i.e.\ it is not a coboundary, then this 
automatically implies that the deformation is nontrivial.

\subsection{The Precise Characterization of the Infinitesimal} \label{sub5.2}

For the rest of this section, we will work in the setting of 
Subsection~\ref{sub3}. We will assume that all the
conditions necessary for Theorem~\ref{thm3} to hold are fulfilled. We will
also assume that the group $G$ is finite and that $P_1$ and $P_2$ are elements
of the subalgebra of $S_\q(V)$ generated by $\{ w_3, \dots, w_k\}$. Notice 
that the examples presented in Section~\ref{motivation} satisfy 
these assumptions.

Recall that at the end of Section~\ref{general}, we found $\mu_1$ in the setting 
of Theorem~\ref{thm3} (see~\eqref{mu1}). In this section, we will give a
direct verification that in this case $\mu_1$ is indeed a Hochschild 
$2$-cocycle by identifying which Hochschild $2$-cocycle $\mu_1$ 
is in relation to a known calculation of the Hochschild cohomology ring of 
$S_\q(V)\#G$. Finally, we will prove that this Hochschild $2$-cocycle is 
nonzero in the Hochschild cohomology ring. As a consequence, the deformations 
found using Theorem~\ref{thm3} and~\eqref{formula} are nontrivial.

As explained in Remark~\ref{rem:derivation}, $D_1$ is a $\sigma,1$-skew 
derivation, and $D_2$ is a $1,\sigma$-skew derivation of $S_\q(V)\#G$. Thus, 
$D_1$ and $D_2$ are Hochschild $1$-cocycles. 
From this it can be directly derived that the infinitesimal $\mu_1$, 
defined in~\eqref{mu1}, is a Hochschild $2$-cocycle for $S_\q(V)\#G$, that is $\mu_1$
satisfies
\[ a \; \mu_1(b,c) + \mu_1(a,bc) = \mu_1(ab,c) + \mu_1(a,b) \; c \quad
   \text{ for all } a,b,c \in S_\q(V)\#G. \]

 The following Hochschild cohomology was computed in~\cite{NAIDU}:
 \begin{equation} \label{eq:piyush}
  \mathrm{HH}^2(S_\q(V), S_\q(V) \# G) \cong \bigoplus_{g \in G} 
  \bigoplus_{\substack{\beta \in \{0,1\}^k \\ |\beta| = 2}} 
  \bigoplus_{\substack{\alpha \in \nat^k \\ \alpha-\beta \in C_g}}
  \mathrm{span}_\fie \{ (w^\alpha \# g) \otimes (w^*)^{\wedge \beta} \} 
 \end{equation}
 as a subspace of $S_\q(V)\#G \otimes \bigwedge_{\q^{-1}}(V^*)$, where $C_g$ is defined to be
 \begin{equation} \label{eq:piyushcg} 
  C_g = \left\{ \gamma \in (\nat \cup \{-1\})^k \; \bigg{|} \; \text{ for every } i = 1,\dots,k, \; 
  \prod_{j=1}^k q_{ij}^{\gamma_j} = \chi_i(g) \; \text{ or } \; \gamma_i = -1 \right\} 
 \end{equation}
 for $g \in G$.
 The following result shows that 
 $P_1 \; g_1 \; P_2 \; g_2 \otimes w_1^* \wedge w_2^*$
 is a Hochschild $2$-cocycle. Later, in Proposition~\ref{prop5.6}, we will show that in fact 
 $P_1 \; g_1 \; P_2 \; g_2 \otimes w_1^* \wedge w_2^*$
 may be identified with our Hochschild $2$-cocycle $\mu_1$ defined in~\eqref{mu1}.

\begin{prop}
 The element $P_1 \; g_1 \; P_2 \; g_2 \otimes w_1^* \wedge w_2^*$ is a 
 representative of an element of $\mathrm{HH}^2(S_\q(V) \# G)$.
\end{prop}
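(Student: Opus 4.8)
The plan is to exhibit $P_1 \, g_1 \, P_2 \, g_2 \otimes w_1^* \wedge w_2^*$ as an element of the right-hand side of the decomposition~\eqref{eq:piyush}, which will immediately identify it as a Hochschild $2$-cocycle. First I would verify that $P_1 \, g_1 \, P_2 \, g_2$ is of the form $w^\alpha \# g$ for some $\alpha \in \nat^k$ and $g \in G$. Indeed, $g_1, g_2$ lie in the center of $G$ by~\eqref{EQU4}, so $g_1 P_2 = g_1(P_2) \, g_1$, and since by assumption $P_1$ and $P_2$ are monomials in $w_3, \dots, w_k$, the product $P_1 \, g_1(P_2)$ is (up to a nonzero scalar coming from the $q$-commutation relations in $S_\q(V)$) a monomial $w^\alpha$ with $\alpha_1 = \alpha_2 = 0$. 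Writing $g = g_1 g_2$, we get $P_1 \, g_1 \, P_2 \, g_2 = c \, w^\alpha \# g$ for some $c \in \fie^\times$, and since the decomposition~\eqref{eq:piyush} is stated up to $\fie$-span, the scalar $c$ is irrelevant.

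Next, fixing $\beta \in \{0,1\}^k$ with $\beta_1 = \beta_2 = 1$ and $\beta_i = 0$ otherwise (so $|\beta| = 2$ and $w^{\wedge\beta} = w_1 \wedge w_2$, matching the dual $w_1^* \wedge w_2^*$), I must check that $\alpha - \beta \in C_g$ for $g = g_1 g_2$. By~\eqref{eq:piyushcg}, this requires, for each $i = 1,\dots,k$, either $(\alpha-\beta)_i = -1$ or $\prod_{j=1}^k q_{ij}^{(\alpha-\beta)_j} = \chi_i(g)$. For $i = 1$ and $i = 2$ the first alternative holds since $\alpha_1 = \alpha_2 = 0$ and $\beta_1 = \beta_2 = 1$. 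For $i \geq 3$ we have $(\alpha - \beta)_i = \alpha_i$, so the condition becomes $\prod_{j \geq 3} q_{ij}^{\alpha_j} = \chi_i(g_1 g_2)$. The key step is to derive this from the conditions of Theorem~\ref{thm3}: conditions~\eqref{EQU5} and~\eqref{EQU6}, combined with the fact that $P_1, P_2$ are monomials in $w_3, \dots, w_k$ and that $g$ acts diagonally with $g(w_j) = \chi_j(g) w_j$, let me compute $g_1(P_1) = \big(\prod_{j} \chi_j(g_1)^{(\text{exponent of } w_j \text{ in } P_1)}\big) P_1$ and similarly for $g_2(P_2)$; relating these product-of-$\chi$ expressions to products of $q_{ij}$ via~\eqref{EQU2} and~\eqref{EQU3} (which encode how $P_i$ $q$-commutes past $w_j$, i.e.\ $\prod_j q_{P_i,w_j}^{(\dots)}$ in terms of the $q_{ij}$, the $\lambda$'s, and the $\chi_i(g_\ell)$) yields exactly the required identity $\prod_{j\geq 3} q_{ij}^{\alpha_j} = \chi_i(g)$.

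I expect this last computation — bookkeeping the exponents of $w_3, \dots, w_k$ in the monomial $\alpha$ and translating the scalar relations~\eqref{EQU2}, \eqref{EQU3}, \eqref{EQU5}, \eqref{EQU6} into the single condition defining $C_g$ — to be the main obstacle, since it is purely a matter of carefully tracking several multiplicative characters and $q$-parameters simultaneously. Once $\alpha - \beta \in C_g$ is established, the element $(w^\alpha \# g) \otimes (w^*)^{\wedge\beta}$ appears as one of the spanning vectors in~\eqref{eq:piyush}, hence $P_1 \, g_1 \, P_2 \, g_2 \otimes w_1^* \wedge w_2^*$ represents an element of $\mathrm{HH}^2(S_\q(V), S_\q(V)\#G)$, and via the standard identification this gives a class in $\mathrm{HH}^2(S_\q(V)\#G)$, completing the proof. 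I would close by remarking that in the motivational example ($P_1 = 1$, $P_2 = w_3$, $g_1 = \sigma_2$, $g_2 = \sigma_1\sigma_2^{-1}$) one checks $\alpha = (0,0,1,0,\dots)$, $g = \sigma_1$, and the condition $\prod_j q_{ij}^{\alpha_j} = \chi_i(\sigma_1)$ for $i = 3$ reads $q_{33} = \chi_3(\sigma_1)$, i.e.\ $1 = q$ — which forces the exponent slot $i=3$ to instead be handled by noting $\alpha_3 - \beta_3 = 1 \neq -1$, so one must double-check the indexing conventions here, but the general mechanism is as described.
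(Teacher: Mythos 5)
Your strategy for the first half --- writing $P_1\,g_1\,P_2\,g_2$ as a scalar multiple of $w^\alpha\#g$ with $g=g_1g_2$, taking $\beta=(1,1,0,\dots,0)$, and checking $\alpha-\beta\in C_g$ --- is exactly the paper's. But there are two genuine gaps. First, the decomposition~\eqref{eq:piyush} computes $\mathrm{HH}^2(S_\q(V), S_\q(V)\#G)$, not $\mathrm{HH}^2(S_\q(V)\#G)$; passing to the latter requires the cocycle to be \emph{invariant under the action of $G$}, since the relevant isomorphism is $\bigl(\bigoplus_{g}\mathrm{HH}^2(S_\q(V),(S_\q(V))_g)\bigr)^G\cong\mathrm{HH}^2(S_\q(V)\#G)$. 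Your ``standard identification'' elides this, and it is not automatic: the paper devotes the second half of its proof to verifying $G$-invariance via a direct computation using~\eqref{EQU5}, \eqref{EQU6} and $g(w_i^*)=\chi_i^{-1}(g)\,w_i^*$. Without that check the proposition is not proved.

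Second, your reduction of the $C_g$ condition for $i\ge 3$ is misstated: since $\gamma=\alpha-\beta$ has $\gamma_1=\gamma_2=-1$, the required identity is $q_{i1}^{-1}\,q_{i2}^{-1}\prod_{j\ge 3}q_{ij}^{\alpha_j}=\chi_i(g_1g_2)$, not $\prod_{j\ge 3}q_{ij}^{\alpha_j}=\chi_i(g_1g_2)$. This is exactly why your sanity check on the motivational example appears to fail: for $i=3$ one has $q_{31}^{-1}q_{32}^{-1}q_{33}=q\cdot 1\cdot 1=q=\chi_3(\sigma_1)$, so the condition holds once the $j=1,2$ factors are restored; your proposed repair (handling the $i=3$ slot via $\gamma_3=-1$) is not available, since $\gamma_3=1$. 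The correct identity is obtained in the paper by computing $P_1P_2w_i$ in two ways --- once with the defining $q_{ij}$-relations of $S_\q(V)$ and once with~\eqref{EQU2} and~\eqref{EQU3} --- and your sketch points at the right ingredients but, as written, targets the wrong equation.
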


\begin{proof}
 By~\cite[Theorem $4.7$]{NAIDU}, it suffices to show that 
 $P_1 \; g_1 \; P_2 \; g_2 \otimes w_1^* \wedge w_2^*$ is 
 a representative of an element of $\mathrm{HH}^2(S_\q(V), S_\q(V)\# G)$
 and is invariant under the action of $G$.
 For the first part, consider the following simple calculation: 
 By~\eqref{EQU6}, we have that
 \[ P_1 \; g_1 \; P_2 \; g_2 = (P_1 \# g_1) \; (P_2 \# g_2)
    = P_1 \; g_1(P_2) \, \# \, g_1 \; g_2
    = \xi(g_1^{-1}) \; \chi_2(g_1) \; P_1 \; P_2 \, \# \, g_1 \; g_2. \]
 Thus, set $g = g_1 \; g_2$ in~\eqref{eq:piyushcg}. 
 The elements $P_1$ and $P_2$ are linear combinations of monomials of the form 
 $w_1^{\rho_1} \; w_2^{\rho_2} \cdots w_k^{\rho_k}$ and $w_1^{\delta_1} \; w_2^{\delta_2} \cdots w_k^{\delta_k}$, 
 respectively, for some $\rho_\ell, \delta_\ell \in \nat$, $\ell = 1,\dots,k$.
 However, since any calculations can be done term-by-term, it suffices to work with just the monomials.
 Thus, we have
 \[ P_1 \; P_2 = w_1^{\rho_1} \cdots w_k^{\rho_k} \; w_1^{\delta_1} \cdots w_k^{\delta_k}. \]
 Set $\alpha = (\rho_1 + \delta_1, \dots, \rho_k + \delta_k)$ and $\beta = (1,1,0,\dots,0)$ 
 in~\eqref{eq:piyush}. To show that 
 \[ P_1 \; g_1 \; P_2 \; g_2 \otimes w_1^* \wedge w_2^* \in \mathrm{HH}^2(S_\q(V), S_\q(V)\#G), \]
 we need to prove that $\alpha - \beta \in C_g$ with $g = g_1 \; g_2$, where
 \begin{align*}
  \alpha - \beta & = (\rho_1 + \delta_1, \dots, \rho_k + \delta_k) - (1,1,0,\dots,0) \\
                 & = (\rho_1 + \delta_1 - 1, \rho_2 + \delta_2 - 1, \rho_3 + \delta_3, \dots, \rho_k + \delta_k).
 \end{align*}
 That is, we want to show that for every $i=1,\dots,k$, the following holds:
 \[ q_{i1}^{\rho_1 + \delta_1 -1} \; q_{i2}^{\rho_2 + \delta_2-1} \; q_{i3}^{\rho_3+\delta_3} \cdots q_{ik}^{\rho_k + \delta_k} = 
    \chi_i(g_1) \; \chi_i(g_2) \quad \text{ or } \quad \gamma_i = -1. \]
 Notice that when $i=1,2$, $\rho_i = \delta_i = 0$, and therefore, $\gamma_i = -1$.
 Otherwise, consider the following:
 \begin{align*}
  P_1 \; P_2 \; w_i
  & = w_1^{\rho_1} \cdots w_k^{\rho_k} \; w_1^{\delta_1} \cdots w_k^{\delta_k} \; w_i \\
  & = \left( \prod_{j=1}^k q_{ji}^{\delta_j} \right) w_1^{\rho_1} \cdots w_k^{\rho_k} \; w_i \; w_1^{\delta_1} \cdots w_k^{\delta_k} \\
  & = \left( \prod_{j=1}^k q_{ji}^{\rho_j} \right) \left( \prod_{j=1}^k q_{ji}^{\delta_j} \right) 
      w_i \; w_1^{\rho_1} \cdots w_k^{\rho_k} \; w_1^{\delta_1} \cdots w_k^{\delta_k} \\
  & = \left( \prod_{j=1}^k q_{ji}^{\rho_j + \delta_j} \right) w_i \; P_1 \; P_2.
 \end{align*}
 By~\eqref{EQU2} and~\eqref{EQU3}, we also have that for $i \neq 1,2$,
 \begin{align*}
  P_1 \; P_2 \; w_i
  & = P_1 \; q_{P_2, w_i} \; w_i \; P_2 = q_{2i} \; \lambda_i(\sigma) \; \chi_i(g_2^{-1}) \; P_1 \; w_i \; P_2 \\
  & = q_{2i} \; \lambda_i(\sigma) \; \chi_i(g_2^{-1}) \; q_{P_1, w_i} \; w_i \; P_1 \; P_2 \\
  & = q_{2i} \; \lambda_i(\sigma) \; \chi_i(g_2^{-1}) \; q_{1i} \; \lambda_i^{-1}(\sigma) \; \chi_i(g_1^{-1}) \; w_i \; P_1 \; P_2 \\
  & = q_{1i} \; q_{2i} \; \chi_i(g_1^{-1}) \; \chi_i(g_2^{-1}) \; w_i \; P_1 \; P_2.
 \end{align*}
 Therefore,
 \[ \prod_{j=1}^k q_{ji}^{\rho_j + \delta_j} = q_{1i} \; q_{2i} \; \chi_i(g_1^{-1}) \; \chi_i(g_2^{-1}). \]
 Or equivalently,
 \[ q_{i1}^{-1} \; q_{i2}^{-1} \; \left( \prod_{j=1}^k q_{ij}^{\rho_j + \delta_j} \right) = \chi_i(g_1) \; \chi_i(g_2) \quad \text{for } i \neq 1,2, \]
 which is exactly what we wanted to show.

 For the second part, to prove that $P_1 \; g_1 \; P_2 \; g_2 \otimes w_1^* \wedge w_2^*$ is 
 invariant under the action of $G$, consider the following calculation:
 \begin{align*}
  g(P_1 \; g_1 \; P_2 \; g_2 \otimes w_1^* \wedge w_2^*) & =
    g(P_1) \; g_1 \; g(P_2) \; g_2 \otimes g(w_1^*) \wedge g(w_2^*) \\ & =
    \chi_1(g) \; \xi(g) \; P_1 \; g_1 \; \chi_2(g) \; \xi(g^{-1}) \; P_2 \; g_2 \otimes 
    \chi_1^{-1}(g) \; w_1^* \wedge \chi_2^{-1}(g) \; w_2^* \\
    & = P_1 \; g_1 \; P_2 \; g_2 \otimes w_1^* \wedge w_2^*
 \end{align*}

 for all $g \in G$, where we used~\eqref{EQU5}, \eqref{EQU6} and
 $g(w_i^*) = \chi_i^{-1}(g) \; w_i^*$.
\end{proof}

 Let us recall the bar resolution of $S_\q(V)$, which we will need in what follows.
 Let $(S_\q(V))^e = S_\q(V) \otimes (S_\q(V))^{\mathrm{op}}$, where $(S_\q(V))^{\mathrm{op}}$ denotes
 the opposite algebra of $S_\q(V)$ as in~\cite{MR1243637}.
 For each $g \in G$, we denote by $(S_\q(V))_g$ the set $\{ cg \; | \; c \in S_\q(V) \}.$
 Notice that $(S_\q(V))_g$ is a left $(S_\q(V))^e$-module via the action
 \[ (a \otimes b) (cg) = ac \; g(b) \; g \quad \text{ for all } a,b,c \in S_\q(V), \; g \in G. \]
 The following is a free $(S_\q(V))^e$-resolution of $S_\q(V)$:
 \[ \cdots \longrightarrow (S_\q(V))^{\otimes 4} \overset{\delta_2}{\longrightarrow}
    (S_\q(V))^{\otimes 3} \overset{\delta_1}{\longrightarrow} (S_\q(V))^e
    \overset{m}{\longrightarrow} S_\q(V) \longrightarrow 0, \]
 where
 \[ \delta_i(a_0 \otimes a_1 \otimes \dots \otimes a_{i+1}) = 
    \sum_{j=0}^i (-1)^j a_0 \otimes \dots \otimes a_j a_{j+1} \otimes \dots \otimes a_{i+1}. \]

 Next, let us introduce the quantum Koszul resolution of $S_\q(V)$, which is again a free 
 $(S_\q(V))^e$-resolution of $S_\q(V)$:
 \[ \cdots \longrightarrow (S_\q(V))^e \otimes \textstyle \bigwedge^2_{\q}(V) \overset{d_2}{\longrightarrow}
    (S_\q(V))^e \otimes \textstyle \bigwedge^1_{\q}(V) \overset{d_1}{\longrightarrow} (S_\q(V))^e
    \overset{m}{\longrightarrow} S_\q(V) \longrightarrow 0, \]
 where 
 \[ d_m(1^{\otimes 2} \otimes w_{j_1} \wedge \dots \wedge w_{j_m}) = \]
 \[ \sum_{i=1}^m (-1)^{i+1} \left[ \left( \prod_{s=1}^i q_{j_s,j_i} \right) w_{j_i} \otimes 1 -
    \left( \prod_{s=i}^m q_{j_i,j_s} \right) \otimes w_{j_i} \right] \otimes w_{j_1} \wedge \dots \wedge
    w_{j_{i-1}} \wedge w_{j_{i+1}} \wedge \dots \wedge w_{j_m} \]
 whenever $1 \leq j_1 < \dots < j_m \leq k$. We refer the reader to~\cite{WAMBST} for
 more details on this construction.
 
 In~\cite{NAIDUWITH}, chain maps $\Psi_i$ are introduced between the bar resolution and 
 the quantum Koszul resolution of $S_\q(V)$.
 In particular, we have the map
 \[ \Psi_2 : (S_\q(V))^{\otimes 4} \longrightarrow (S_\q(V))^e \otimes \textstyle \bigwedge^2_{\q}(V) \]
 such that
 \begin{equation} \label{psi2}
  \Psi_2 (1 \otimes w_i \otimes w_j \otimes 1) = 
  \begin{cases}
   1 \otimes 1 \otimes w_i \wedge w_j, & \text{ for } 1 \leq i < j \leq k, \\
   0, & \text{ for } i \geq j.
  \end{cases}
 \end{equation}
 We will also need the following two maps:
 \begin{align*} 
  R_2 : \mathrm{Hom}_\fie \left( (S_\q(V))^{\otimes 2}, S_\q(V)\#G \right) & \longrightarrow 
        \mathrm{Hom}_\fie \left( (S_\q(V))^{\otimes 2}, S_\q(V)\#G \right)^G \\
  R_2(\gamma) & = \frac{1}{|G|} \sum_{g \in G} g(\gamma)
 \end{align*} 
 and
 \begin{align*}
  \theta_2^* : \mathrm{Hom}_\fie \left( (S_\q(V))^{\otimes 2}, S_\q(V)\#G \right)^G & \longrightarrow 
               \mathrm{Hom}_\fie \left( (S_\q(V)\#G)^{\otimes 2}, S_\q(V)\#G \right) \\
  \theta_2^*(\gamma)(a g \otimes b h) & = \gamma (a \otimes g(b)) \, g h.
 \end{align*}
 As discussed in~\cite{NAIDUWITH}, since $\Psi_2$ may not preserve the action of $G$,
 the map $R_2$ ensures $G$-invariance of the image. The map $\theta_2^*$ extends a function 
 defined on $(S_\q(V))^{\otimes 2}$ to a function defined on $(S_\q(V)\#G)^{\otimes 2}$.

 By~\cite[Theorem~$3.5$]{NAIDUWITH}, the composition $\theta_2^* \; R_2 \; \Psi_2^*$ induces
 an isomorphism 
 \[ \left( \bigoplus_{g \in G} \mathrm{HH}^2 (S_\q(V), (S_\q(V))_g) \right)^G \longrightarrow \;
    \mathrm{HH}^2 (S_\q(V)\#G). \]
 As a consequence, we get that if $\kappa \in (S_\q(V)\#G) \otimes \textstyle \bigwedge^2_{\q^{-1}}(V^*)$, then
 \begin{equation} \label{naiduwith}
    [\theta_2^* R_2 \Psi_2^*(\kappa)](w_i \otimes w_j) = 
    \frac{1}{|G|} \sum_{g \in G} g(\kappa(\Psi_2(1 \otimes g^{-1}(w_i) \otimes g^{-1}(w_j) \otimes 1)))
    \; \text{ for } i < j.
 \end{equation}

The following proposition is an explicit description of $\mu_1$. The idea of the proof is that
we will apply $\mu_1$ and $P_1 \; g_1 \; P_2 \; g_2 \otimes w_1^* \wedge w_2^*$ to $w_i \otimes w_j$
and show that we obtain the same result.

\begin{prop} \label{prop5.6}
 The map $\mu_1$ can be identified with $P_1 \; g_1 \; P_2 \; g_2 \otimes w_1^* \wedge w_2^*$.
\end{prop}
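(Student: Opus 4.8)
The plan is to compute both sides of the claimed identification on the generating pairs $w_i \otimes w_j$ with $i < j$ and check they agree, using the explicit formula~\eqref{naiduwith} for the Hochschild cohomology isomorphism and the formula~\eqref{mu1} for $\mu_1$. First I would recall that $\mu_1 = m \circ (D_1 \otimes D_2)$, so for $i < j$ we have $\mu_1(w_i \otimes w_j) = D_1(w_i) \, D_2(w_j)$ in $S_\q(V)\#G$. Given the assumptions of Theorem~\ref{thm3}, $D_1(w_i)$ is nonzero only when $i = 1$ (equal to $P_1 g_1$) and $D_2(w_j)$ is nonzero only when $j = 2$ (equal to $P_2 g_2$). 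Hence among the ordered pairs $w_i \otimes w_j$ with $i < j$, the only one on which $\mu_1$ is nonzero is $w_1 \otimes w_2$, where $\mu_1(w_1 \otimes w_2) = P_1 g_1 P_2 g_2$.

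Next I would compute $[\theta_2^* R_2 \Psi_2^*(\kappa)](w_i \otimes w_j)$ for $\kappa = P_1 g_1 P_2 g_2 \otimes w_1^* \wedge w_2^*$ using~\eqref{naiduwith}. Since the action of $G$ on $V$ is diagonal, $g^{-1}(w_i) = \chi_i(g^{-1}) w_i$ and $g^{-1}(w_j) = \chi_j(g^{-1}) w_j$, so by~\eqref{psi2} the term $\Psi_2(1 \otimes g^{-1}(w_i) \otimes g^{-1}(w_j) \otimes 1)$ equals $\chi_i(g^{-1})\chi_j(g^{-1}) \, (1 \otimes 1 \otimes w_i \wedge w_j)$ when $i < j$ and vanishes otherwise. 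Then $\kappa$ evaluated on $1 \otimes 1 \otimes w_i \wedge w_j$ is $(w_1^* \wedge w_2^*)(w_i \wedge w_j) \cdot P_1 g_1 P_2 g_2$, which is zero unless $\{i,j\} = \{1,2\}$, i.e.\ unless $(i,j) = (1,2)$. So again the only nonzero pair is $w_1 \otimes w_2$, and there the expression becomes $\tfrac{1}{|G|} \sum_{g \in G} \chi_1(g^{-1})\chi_2(g^{-1}) \, g(P_1 g_1 P_2 g_2)$. I would then expand $g(P_1 g_1 P_2 g_2) = g(P_1) \, g_1 \, g(P_2) \, g_2$ and apply~\eqref{EQU5} and~\eqref{EQU6}, which give $g(P_1) = \chi_1(g)\xi(g) P_1$ and $g(P_2) = \chi_2(g)\xi(g^{-1}) P_2$; the factors $\chi_1(g)\chi_2(g)$ cancel the $\chi_1(g^{-1})\chi_2(g^{-1})$ and $\xi(g)\xi(g^{-1}) = 1$, so every summand equals $P_1 g_1 P_2 g_2$ and the average is just $P_1 g_1 P_2 g_2$. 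This matches $\mu_1(w_1 \otimes w_2)$.

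Finally I would note that both $\mu_1$ and the cocycle $\theta_2^* R_2 \Psi_2^*(\kappa)$ are bilinear and determined by their values on pairs of generators $w_i \otimes w_j$ (the group-algebra part of $S_\q(V)\#G$ is handled by $\theta_2^*$ and by the skew-derivation nature of $D_1, D_2$ described in Remark~\ref{rem:derivation}), so agreement on all the $w_i \otimes w_j$ with $i < j$ — and the symmetric reasoning or the antisymmetry of the exterior form for $i > j$, with the diagonal pairs being trivial — forces the two cocycles to represent the same class in $\mathrm{HH}^2(S_\q(V)\#G)$. The main obstacle I anticipate is bookkeeping rather than conceptual: one must be careful that $\mu_1$, a priori a $2$-cochain on all of $S_\q(V)\#G$, is genuinely represented, under the identification coming from~\cite[Theorem~$3.5$]{NAIDUWITH}, by an element of $(S_\q(V)\#G)\otimes\bigwedge^2_{\q^{-1}}(V^*)$ of the stated form, i.e.\ that restricting attention to the $w_i\otimes w_j$ really does capture the cohomology class and that the normalization in $\theta_2^*$ matches the smash-product multiplication used to define $\mu_1$. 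Verifying this compatibility of conventions, together with the straightforward character computation above, completes the proof.
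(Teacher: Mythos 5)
Your proposal is correct and follows essentially the same route as the paper: evaluate both $\mu_1 = m\circ(D_1\otimes D_2)$ and $\theta_2^*R_2\Psi_2^*(P_1g_1P_2g_2\otimes w_1^*\wedge w_2^*)$ on the pairs $w_i\otimes w_j$ via~\eqref{naiduwith} and~\eqref{psi2}, use the diagonal $G$-action together with~\eqref{EQU5} and~\eqref{EQU6} to collapse the averaging sum to $P_1g_1P_2g_2$, and match this against $D_1(w_1)D_2(w_2)$. Your closing caveat about checking only on generators is a point the paper passes over silently, but it does not change the substance of the argument.
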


\begin{proof}
 Set $\kappa = P_1 g_1 P_2 g_2 \otimes w_1^* \wedge w_2^*$ in~\eqref{naiduwith}. Then we have that
 \[ [\theta_2^* R_2 \Psi_2^*(P_1 g_1 P_2 g_2 \otimes w_1^* \wedge w_2^*)](w_i \otimes w_j) = \]
 \[ \frac{1}{|G|} \; \sum_{g \in G} g \left( P_1 g_1 P_2 g_2 (w_1^* \wedge w_2^*) \left( 
    \Psi_2 \left(1 \otimes g^{-1} \left(w_i \right) \otimes g^{-1} \left(w_j \right) \otimes 1 \right) \right) \right). \]
 By~\eqref{psi2}, we may simplify this expression as follows:
 \[ \frac{1}{|G|} \; \sum_{g \in G} g \left( P_1 g_1 P_2 g_2 (w_1^* \wedge w_2^*) \left(
    1 \otimes 1 \otimes g^{-1} \left(w_i \right) \wedge g^{-1} \left(w_j \right) \right) \right). \]
 Since $g^{-1}(w_i) = \chi_i^{-1}(g) \; w_i$ and applying~\eqref{EQU6}, this becomes
 \[ \frac{1}{|G|} \; \sum_{g \in G} \xi(g_1^{-1}) \; \chi_2(g_1) \; g \left( P_1 P_2 \# g_1 g_2 (w_1^* \wedge w_2^*)
    \left( 1 \otimes 1 \otimes \chi_i^{-1}(g) \; w_i \wedge \chi_j^{-1}(g) \; w_j \right) \right). \]
 By linearity, we get
 \[ \frac{1}{|G|} \; \xi(g_1^{-1}) \; \chi_2(g_1) \; \sum_{g \in G} \chi_i^{-1}(g) \; \chi_j^{-1}(g) \; g \left( P_1 P_2 \# g_1 g_2 (w_1^* \wedge w_2^*) 
    \left( 1 \otimes 1 \otimes w_i \wedge w_j \right) \right). \]
 If we assume that $i < j$, then
 \[ (w_1^* \wedge w_2^*)(w_i \wedge w_j) =
    \begin{cases}
     1, & \text{ if } i = 1 \text{ and } j = 2, \\
     0, & \text{ otherwise. }
    \end{cases}
 \]
 Thus, letting $i=1$ and $j=2$, the above expression becomes
 \[ \frac{1}{|G|} \; \xi(g_1^{-1}) \; \chi_2(g_1) \; \sum_{g \in G} \chi_1^{-1}(g) \; \chi_2^{-1}(g) \; g( P_1 P_2 \# g_1 g_2) = \]
 \[ \frac{1}{|G|} \; \xi(g_1^{-1}) \; \chi_2(g_1) \; \sum_{g \in G} \chi_1^{-1}(g) \; \chi_2^{-1}(g) \; g(P_1) \; g(P_2) \# g_1 g_2 = \]
 \[ \frac{1}{|G|} \; \xi(g_1^{-1}) \; \chi_2(g_1) \; \sum_{g \in G} \chi_1^{-1}(g) \; \chi_2^{-1}(g) \; \chi_1(g)\; \xi(g) \; P_1 \; \chi_2(g) \; \xi(g^{-1}) \; P_2 \# g_1 g_2 \]
 by~\eqref{EQU5} and~\eqref{EQU6}. Simplifying, we obtain
 \[ \xi(g_1^{-1}) \; \chi_2(g_1) \; P_1 \; P_2 \# g_1 g_2 = P_1 \; g_1 \; P_2 \; g_2. \]
Therefore, we have shown that
\[ [\theta_2^* R_2 \Psi_2^*(P_1 g_1 P_2 g_2 \otimes w_1^* \wedge w_2^*)](w_i \otimes w_j) = 
   \begin{cases}
    P_1 \; g_1 \; P_2 \; g_2, & \text{ if } i=1 \text{ and } j=2, \\
    0, & \text{ otherwise. }
   \end{cases} \]
On the other hand, since $\mu_1 = m \circ (D_1 \otimes D_2)$, we have that
\[ \mu_1(w_i \otimes w_j) = D_1(w_i) \; D_2(w_j) =
\begin{cases}
 P_1 \; g_1 \; P_2 \; g_2, & \text{ if } i = 1 \text{ and } j = 2, \\
 0, & \text{ otherwise. }
\end{cases}
\]
Therefore, $\mu_1$ can be identified with $P_1 \; g_1 \; P_2 \; g_2 \otimes w_1^* \wedge w_2^*$.
\end{proof}

Since $P_1$, $g_1$, $P_2$, $g_2$, $w_1$ and $w_2$ are nonzero, we know that $P_1 \; g_1 \; P_2 \; g_2 \otimes w_1^* \wedge w_2^*$
is a nonzero Hochschild $2$-cocycle. Therefore, we have obtained the following:
\begin{thm} \label{nontrivial}
 When $G$ is finite and $P_1$ and $P_2$ are elements of the subalgebra of $S_\q(V)$
 generated by $\{ w_3, \dots, w_k \}$, all the deformations that result from 
 Theorem~\ref{thm3} and~\eqref{formula} are nontrivial.
\end{thm}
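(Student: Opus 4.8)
The plan is to reduce the nontriviality of each such deformation to the single fact that its infinitesimal $\mu_1$ represents a nonzero class in $\mathrm{HH}^2(S_\q(V)\#G)$, which is precisely what the two preceding propositions establish. Recall from Subsection~\ref{sub5.1} (following~\cite[Section~$4$]{MR2762537}) that if a formal deformation were trivial, i.e.\ isomorphic as a $\fie[[t]]$-algebra to $(S_\q(V)\#G)[[t]]$, then its infinitesimal $\mu_1$ would necessarily be a Hochschild $2$-coboundary. So it suffices to exhibit a deformation coming from Theorem~\ref{thm3} and~\eqref{formula} whose $\mu_1$ is \emph{not} a coboundary, and then argue this for every such deformation.

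First I would recall that by~\eqref{mu1} the infinitesimal of any deformation obtained from Theorem~\ref{thm3} via~\eqref{formula} is $\mu_1 = m \circ (D_1 \otimes D_2)$, where $D_1(w_1) = P_1 g_1$, $D_2(w_2) = P_2 g_2$, and $D_1$, $D_2$ vanish on all other basis vectors $w_i$ and on $G$. Next, invoking the hypothesis that $G$ is finite and that $P_1, P_2$ lie in the subalgebra of $S_\q(V)$ generated by $\{w_3,\dots,w_k\}$ — so that all the conditions of Theorem~\ref{thm3} can be (and are) assumed to hold, and in particular~\eqref{EQU5} and~\eqref{EQU6} apply — Proposition~\ref{prop5.6} identifies $\mu_1$ with the explicit cocycle $P_1\, g_1\, P_2\, g_2 \otimes w_1^* \wedge w_2^*$ sitting inside $(S_\q(V)\#G) \otimes \bigwedge^2_{\q^{-1}}(V^*)$ under the isomorphism of~\cite[Theorem~$3.5$]{NAIDUWITH}. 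Since $P_1$, $P_2$, $g_1$, $g_2$ are all nonzero and $w_1^* \wedge w_2^* \neq 0$, this element is a nonzero element of the cohomology group $\mathrm{HH}^2(S_\q(V)\#G)$ as described by~\eqref{eq:piyush}; in particular it is not a coboundary.

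Putting these pieces together: for an arbitrary deformation arising from Theorem~\ref{thm3} and~\eqref{formula} under the stated hypotheses, its infinitesimal $\mu_1$ is a nonzero class in $\mathrm{HH}^2(S_\q(V)\#G)$, hence not a coboundary, hence (by the discussion in Subsection~\ref{sub5.1}, cf.~\cite[Section~$4$]{MR2762537}) the deformation cannot be isomorphic to $(S_\q(V)\#G)[[t]]$, i.e.\ it is nontrivial. The one point that requires care — and which is really the substance of the whole section rather than of this final theorem — is the verification that $\mu_1$ genuinely \emph{is} a well-defined Hochschild $2$-cocycle and that it corresponds, under the comparison maps $\theta_2^*\,R_2\,\Psi_2^*$ between the bar and quantum Koszul resolutions, to the concrete element $P_1 g_1 P_2 g_2 \otimes w_1^*\wedge w_2^*$ appearing in Naidu's computation~\eqref{eq:piyush}; this is exactly the content of the preceding two propositions, and here it is used as a black box. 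Given those, the proof of Theorem~\ref{nontrivial} is just the short cohomological implication above, so there is no remaining obstacle.
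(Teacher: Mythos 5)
Your proposal is correct and follows essentially the same route as the paper: it invokes Proposition~\ref{prop5.6} to identify $\mu_1$ with $P_1\,g_1\,P_2\,g_2\otimes w_1^*\wedge w_2^*$, observes that this is a nonzero element of the cohomology as described by~\eqref{eq:piyush} and the isomorphism of~\cite[Theorem~$3.5$]{NAIDUWITH} (hence not a coboundary), and concludes nontriviality via the standard fact from Subsection~\ref{sub5.1}. The paper's own proof is exactly this one-line deduction from the two preceding propositions, so there is nothing to add.
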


\begin{acknow}
 Most of the results in this work are part of my doctoral dissertation.
 I would like to thank my advisor, Dr. Sarah Witherspoon, for all her guidance, instruction and support.
\end{acknow}


\bibliographystyle{plain}
\bibliography{biblio}

\end{document}